\newcounter{counter1}
\newtheorem{lemma}[counter1]{Lemma}
\newtheorem{theorem}[counter1]{Theorem}
\newtheorem{corollary}[counter1]{Corollary}
\newtheorem{proposition}[counter1]{Proposition}
\theoremstyle{definition}
\newtheorem{remark}[counter1]{Remark}
\newtheorem{definition}[counter1]{Definition}
\DeclareMathOperator{\Basic}{Basic}
\DeclareMathOperator{\Cay}{Cay}
\DeclareMathOperator{\WL}{WL}
\DeclareMathOperator{\CR}{CR}
\newcommand{\Con}{U}
\newcommand{\con}{u}
\title{Combinatorial refinement on circulant graphs}
\author{\href{mailto:laurence.kluge@googlemail.com}{Laurence Kluge}\footnote{This work is based on the bachelor thesis of the author, which has emerged from the DFG project KO 1053/8--2.}\\ Institut für Informatik, Humboldt-Universität zu Berlin, Germany }
\date{}
\begin{document}
\newcommand{\high}{\textbf}

\maketitle
\vspace{1.3cm}
\paragraph{Abstract.} The combinatorial refinement techniques have proven to be an efficient
approach to isomorphism testing for particular classes of graphs.
If the number of refinement rounds is small, this puts
the corresponding isomorphism problem in a low-complexity class.
We investigate the round complexity of the 2-dimensional Weisfeiler-Leman algorithm
on circulant graphs, i.e.\ on Cayley graphs of the cyclic group $\mathbb{Z}_n$, and prove that the number of rounds until stabilization is bounded by $\mathcal{O}(d(n)\log n)$,
where $d(n)$ is the number of divisors of $n$. As a particular consequence, isomorphism can be tested in NC for connected circulant graphs of order $p^\ell$ with $p$ an odd prime, $\ell>3$ and vertex degree $\Delta$ smaller than $p$.

We also show that the color refinement method (also known as the 1-dimensional Weisfeiler-Leman algorithm)
computes a canonical labeling for every non-trivial circulant graph with a prime number of vertices
after individualization of two appropriately chosen vertices.
Thus, the canonical labeling problem for this class of graphs has at most the same
complexity as color refinement, which results in a time bound of $\mathcal{O}(\Delta \, n\log n)$.
Moreover, this provides a first example where 
a sophisticated approach to isomorphism testing put forward by Tinhofer has a real practical meaning.
\vfill
\tableofcontents
\vspace{0.3cm}
\newpage
\section{Introduction}
A classical algorithm to test for graph isomorphism is the $k$-dimensional Weisfeiler-Leman algorithm ($k$-WL), which was first suggested in its 2-dimensional form by Weisfeiler and Leman in \cite{WLe68} and investigated in general by Cai, Fürer and Immerman in \cite{CaiFI92}. Given a graph $G = (V, E)$ the $k$-WL algorithm computes a canonical coloring $\WL_k(G)$ of the set $V^k$ by repeated combinatorial refinement. Then two graphs $G_1$ and $G_2$ are decided as isomorphic if they have the same multiset of colors appearing in the coloring $\WL_{k}(G_i)$ for $i=1,2$. Isomorphic graphs are always recognized as isomorphic but for every $k \in \mathbb{N}$ there are examples of non-isomorphic graphs indistinguishable by $k$-WL. Nevertheless for many graph classes there is a $k$ such that the $k$-WL algorithm correctly decides graph isomorphism for all graphs of this class.

In particular Ponomarenko and Ryabov showed in \cite{ponomarenko2021pseudofrobenius} that $k=2$ is enough to distinguish Cayley graphs over the cyclic groups $\mathbb{Z}_n$ with Frobenius automorphism group and $n \not\in \{p, p^2, p^3, pq, p^2q\}$ for all primes $p$ and $q$. This includes all connected Cayley graphs over cyclic groups of order $p^\ell$ with $p$ an odd prime, $\ell>3$ and vertex degree $\Delta$ smaller than $p$.

We will primarily be interested in the $2$-WL algorithm. Unless otherwise stated, we always consider directed graphs without loops (which includes undirected graphs as a particular case). The \high{2-WL} algorithm starts with a coloring of $V^2$ into (up to) five colors given by the induced structure on the two vertices of a tuple (for each tuple $(v, w)$ the color depends on which of the following hold: $v =w$, $(v, w) \in E$ and $(w, v) \in E$). This coloring is then refined repeatedly until the partition stabilizes. Let $c(v,w)$ be the current color of the tuple $(v,w)$. In each step every tuple $(v_1, v_2)$ gets assigned the new color $\{\{(c(v_1, v_2), c(v, v_2), c(v_1, v)) \mid v \in V\}\}$ which is the multiset of colored triangles obtained by adding a third vertex $v \in V$ to the tuple $(v_1, v_2)$. To avoid exponential growth in the color names we would need to repeatedly give them new aliases. As a small example see Figure \ref{fig1}.

The number of rounds that the $k$-WL algorithm needs to distinguish two graphs is also important, as Grohe and Verbitsky showed in \cite{GroheV06} that the $r$-round $k$-WL algorithm can be implemented by TC circuits of depth $\mathcal{O}(r)$ and size $\mathcal{O}(r \cdot n^{3k})$. In particular, if the number of rounds is  polylogarithmic then graph isomorphism can be checked in \NC. For $2$-WL in general $\mathcal{O}(n \log n)$ rounds are always enough \cite{LichterPS19}, but there are examples where linearly many rounds are necessary \cite{Furer01}. For some special classes of graphs better upper bounds are known. For example, $2$-WL solves the isomorphism problem of trees in $\mathcal{O}(\log n)$ rounds \cite{PikhurkoV11}. With a larger dimension $k$, also the isomorphism problem of other graphs classes, such as graphs of bounded tree-width \cite{GroheV06} and planar graphs \cite{Verbitsky07,GroheK21} can be solved by $k$-WL in $\mathcal{O}(\log n)$ rounds.

Let $H$ be a finite group with neutral element $e$. For any subset $S \subseteq H \setminus \{e\}$, called the connection set, we form the \high{Cayley graph} $\Cay(H, S) = (V, E)$ with vertices $V = H$ and edges $(h,sh) \in E$ for any $h \in H$ and $s \in S$. A Cayley graph over the cyclic group $\mathbb{Z}_n$ is also called a circulant graph. We are going to interpret the $2$-WL algorithm on Cayley graphs as a refinement of specific linear subspaces of the group ring $\mathbb{Q}[H]$ that we call Schur-modules  (similar to the Schur-rings which are additionally closed under multiplication and therefore the results of the $2$-WL refinement). Based on this approach, we will prove:
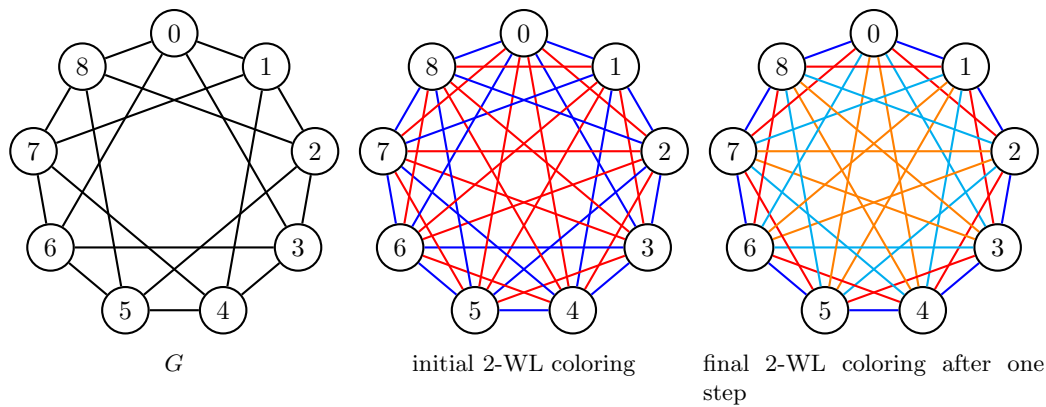
\begin{figure}
    \begin{subfigure}[t]{0.3 \textwidth}
        \centering
        \begin{tikzpicture}[node distance={15mm}, thick, main/.style = {draw, circle}] 
            \foreach \n in {0,...,8} {
                \node[main] (\n) at ({1.9*sin((360*\n) / 9)}, {1.9*cos((360*\n)/9)}) {\n};
            }
            \foreach \n in {0,...,8} {
                \foreach \m in {1, 3} {
                    \pgfmathtruncatemacro{\cur}{\intcalcMod{\n+\m}{9}}
                     \draw[-] (\n) -- (\cur);
                }
            }
        \end{tikzpicture}
        \caption*{$G$}
    \end{subfigure}
    \begin{subfigure}[t]{0.3 \textwidth}
        \centering
        \begin{tikzpicture}[node distance={15mm}, thick, main/.style = {draw, circle}] 
            \foreach \n in {0,...,8} {
                \node[main] (\n) at ({1.9*sin((360*\n) / 9)}, {1.9*cos((360*\n)/9)}) {\n};
            }
            \foreach \n in {0,...,8} {
                \foreach \m in {1, 3} {
                    \pgfmathtruncatemacro{\cur}{\intcalcMod{\n+\m}{9}}
                    \draw[-, draw=blue] (\n) -- (\cur);
                }
                \foreach \m in {2, 4} {
                    \pgfmathtruncatemacro{\cur}{\intcalcMod{\n+\m}{9}}
                    \draw[-, draw=red] (\n) -- (\cur);
                }
            }
        \end{tikzpicture}
        \caption*{initial $2$-WL coloring}
    \end{subfigure}
    \begin{subfigure}[t]{0.3 \textwidth}
        \centering
        \begin{tikzpicture}[node distance={15mm}, thick, main/.style = {draw, circle}] 
            \foreach \n in {0,...,8} {
                \node[main] (\n) at ({1.9*sin((360*\n) / 9)}, {1.9*cos((360*\n)/9)}) {\n};
            }
            \foreach \n in {0,...,8} {
                \foreach \m in {1} {
                  \pgfmathtruncatemacro{\cur}{\intcalcMod{\n+\m}{9}}
                    \draw[-, draw=blue] (\n) -- (\cur);
                }
                \foreach \m in {3} {
                  \pgfmathtruncatemacro{\cur}{\intcalcMod{\n+\m}{9}}
                    \draw[-, draw=cyan] (\n) -- (\cur);
                }
                \foreach \m in {2} {
                  \pgfmathtruncatemacro{\cur}{\intcalcMod{\n+\m}{9}}
                    \draw[-, draw=red] (\n) -- (\cur);
                }
                \foreach \m in {4} {
                  \pgfmathtruncatemacro{\cur}{\intcalcMod{\n+\m}{9}}
                    \draw[-, draw=orange] (\n) -- (\cur);
                }
            }
        \end{tikzpicture}
        \caption*{final $2$-WL coloring after one step}
    \end{subfigure}
    \caption{The $2$-WL algorithm on the Cayley graph $G=\Cay(\mathbb{Z}_9, \{1,3, 6, 8\})$.} \label{fig1}
\end{figure} %
\begin{theorem} \label{dnlogn_bound}
For all $n \in \mathbb{N}$ and all circulant graphs of order $n$ the 2-dimensional Weisfeiler-Leman algorithm terminates in $\mathcal{O}(d(n) \log(n))$ steps, where $d(n)$ is the number of divisors of $n$.
\end{theorem}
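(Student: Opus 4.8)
The plan is to use the translation symmetry of circulant graphs to collapse the $2$-WL run to the refinement of a single partition of $\mathbb{Z}_n$, and then to factor the round count as a product of a $\log n$ term, coming from a squaring argument in the group ring, and a $d(n)$ term, coming from the decomposition of $\mathbb{Q}[\mathbb{Z}_n]$ along the divisors of $n$.

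First I would observe that every translation $x\mapsto x+c$ is an automorphism of $\Cay(\mathbb{Z}_n,S)$, so the $2$-WL color $c(v,w)$ can depend only on the difference $w-v$. The whole run is therefore captured by an increasing chain of partitions of $\mathbb{Z}_n$, which I encode by their Schur-modules $\mathcal{M}_0\subseteq\mathcal{M}_1\subseteq\cdots\subseteq\mathbb{Q}[\mathbb{Z}_n]$, with $\mathcal{M}_t$ the linear span of the indicator elements $\xi_\alpha=\sum_{c(d)=\alpha}[d]$ of the round-$t$ color classes. Writing out the triangle-counting rule in these terms shows that the refined color of a difference $d$ is governed by the coefficients of $[d]$ in the convolution products $\xi_\alpha * \xi_\beta$; hence one refinement round replaces $\mathcal{M}_t$ by the partition algebra (the closure under pointwise, i.e.\ Hadamard, products) of $\mathcal{M}_t+\mathcal{M}_t*\mathcal{M}_t$, and a partition is stable precisely when its Schur-module is closed under both convolution and Hadamard products, i.e.\ is an S-ring over $\mathbb{Z}_n$.

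The logarithmic factor I would extract from a doubling argument: since $[0]\in\mathcal{M}_0$ one has $\mathcal{M}_t*\mathcal{M}_t\subseteq\mathcal{M}_{t+1}$, so after $t$ rounds $\mathcal{M}_t$ contains every convolution product of at most $2^t$ generators. Because $\dim\mathbb{Q}[\mathbb{Z}_n]=n$, a Cayley--Hamilton argument applied to the regular representation shows that products of length at most $n$ already span the convolution-subalgebra generated by any set of elements; thus $\mathcal{O}(\log n)$ rounds suffice to saturate the convolution closure of the current module. For the factor $d(n)$ I would use the rational decomposition $\mathbb{Q}[\mathbb{Z}_n]=\bigoplus_{m\mid n}\mathbb{Q}(\zeta_m)$ into its $d(n)$ simple components, indexed by the divisors of $n$. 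To each component I attach a potential (for instance the dimension of the image of $\mathcal{M}_t$ there, or the corresponding block of the eigenvalue partition of the connection set), argue that the squaring argument resolves each component within $\mathcal{O}(\log n)$ rounds of progress attributable to it, and argue that every non-stable round advances at least one component. Summing the per-component budgets over the $d(n)$ components then yields the claimed bound $\mathcal{O}(d(n)\log n)$.

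The hard part will be the interface between the two closures. Taking Hadamard products is ``free'' for a partition algebra, but in the Fourier dual it is a convolution and therefore mixes the $d(n)$ components; consequently the per-component potentials are not independent, they need not be monotone in isolation, and a subalgebra of $\bigoplus_{m\mid n}\mathbb{Q}(\zeta_m)$ need not split as a product of its projections (the ``diagonal'' phenomenon), so the naive summation of potentials can fail. Controlling this cross-talk is the crux, and I expect to need the structure theory of S-rings over cyclic groups (in the spirit of Wielandt and Leung--Man), which forces the stable and intermediate colorings to be generalized wreath decompositions respecting the filtration by the subgroups $\tfrac{n}{m}\mathbb{Z}_n$, one for each divisor $m\mid n$, and thereby rules out the pathological diagonal configurations. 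One routine point must be handled separately: in the directed case the module has to be kept closed under the inversion $x\mapsto -x$ as well, so that the stable module is genuinely an S-ring.
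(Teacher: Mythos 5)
Your setup --- collapsing 2-WL via translation invariance to a refinement process on S-modules inside $\mathbb{Q}[\mathbb{Z}_n]$, with stabilization meaning closure under both convolution and Hadamard products --- coincides with the paper's Section \ref{3.1}, and even your doubling observation (after $t$ rounds the module contains convolution products of up to $2^t$ earlier elements) appears there. But your mechanism for the factor $d(n)$ has a genuine gap, and it is exactly the one you flag yourself. You index potentials by the Wedderburn components $\mathbb{Q}[\mathbb{Z}_n]\cong\bigoplus_{m\mid n}\mathbb{Q}(\zeta_m)$ and need (i) every non-stable round to advance some component, and (ii) each component to absorb only $\mathcal{O}(\log n)$ advances. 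Property (i) fails for projection-based potentials: a module $V$ can grow strictly while every projection $\pi_m(V)$ stays fixed, because $\dim V$ can be smaller than $\sum_{m\mid n}\dim\pi_m(V)$ and a refinement round may only add ``gluing'' information between components --- the diagonal phenomenon you name. Your proposed repair, invoking Wielandt/Leung--Man structure theory of S-rings over cyclic groups, does not close this gap: that theory describes objects already closed under both products, whereas the intermediate modules arising during refinement are not S-rings, so nothing forces them into generalized wreath form. The crux is thus acknowledged but not resolved; no potential satisfying (i) and (ii) is actually exhibited, and this is precisely the hard part of the theorem.

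The paper resolves this point by working on the group side rather than the character side. Its $d(n)$ objects are not the Wedderburn components but the orbits $\{m\in\mathbb{Z}_n \mid \gcd(m,n)=d\}$ of the action of $\mathbb{Z}_n^{\times}$ on $\mathbb{Z}_n$, one per divisor, and the potential is purely combinatorial: the size of the basic set containing a fixed representative of each orbit. To make this potential well-defined it first proves a quantitative form of Schur's theorem on multipliers (Lemma \ref{cylic_log_n}): repeated squaring gives $\underline{T}^{p}\in R^{\lceil\log p\rceil}(V)$, and the congruence $\underline{T}^{p}\equiv\underline{T^{(p)}}\ (\mathrm{mod}\ p)$ lets one extract $\underline{T}^{(p)}$ by coefficient inspection, so that after $2\lceil\log n\rceil$ rounds the module is \emph{exponentiation-stable}, i.e.\ closed under $T\mapsto T^{(m)}$ for $\gcd(m,n)=1$; this property then persists under refinement (Lemma \ref{refinement_cyclic}). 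For an exponentiation-stable module the basic-set-size function is constant on each of the $d(n)$ orbits, every non-stabilizing round halves it on at least one orbit, and each value can be halved at most $\lceil\log n\rceil$ times, which gives the bound of Theorem \ref{wl_steps_cyclic_bound} with no cross-talk issue at all. This multiplier/orbit idea is the ingredient missing from your proposal; without it, or a concrete substitute for your component potentials, your argument does not go through.
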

We say that a property holds for almost all $n$, if the proportion of values $n \leq N$ with this property tends to $1$ as $N$ tends to infinity. Let $\epsilon > 0$ be fixed. As $\sum_{i=1}^n d(i) = \mathcal{O}(n\log n)$, we also get $d(n) \leq \log(n)^{1+\epsilon}$ for almost all $n$ and then Theorem \ref{dnlogn_bound} implies:
\begin{corollary}
For almost all $n \in \mathbb{N}$ the 2-dimensional Weisfeiler-Leman algorithm terminates on all circulant graph of order $n$  in $\mathcal{O}(\log(n)^{2+\epsilon})$ steps. 
\end{corollary}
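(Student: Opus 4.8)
The plan is to derive the corollary purely from Theorem \ref{dnlogn_bound} together with a density estimate on the divisor function, so that no further structural input about circulant graphs or $2$-WL is needed. Since Theorem \ref{dnlogn_bound} gives a round bound of $\mathcal{O}(d(n)\log n)$, it suffices to establish that $d(n) \le (\log n)^{1+\epsilon}$ holds for almost all $n$; multiplying this pointwise bound by the extra factor $\log n$ then yields the claimed $\mathcal{O}((\log n)^{2+\epsilon})$.

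The key step is to pass from the average bound $\sum_{i=1}^{N} d(i) = \mathcal{O}(N\log N)$ to a statement about almost all individual values of $d(n)$. For this I would apply Markov's inequality to the uniform distribution on $\{1,\dots,N\}$: the number of $n \le N$ with $d(n) > T$ is at most $T^{-1}\sum_{i\le N} d(i) = \mathcal{O}(N\log N / T)$. Choosing a threshold $T$ of order $(\log N)^{1+\epsilon}$ makes this count $\mathcal{O}\!\left(N/(\log N)^{\epsilon}\right) = o(N)$, so the proportion of such $n$ tends to $0$.

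A small technical point to handle is the discrepancy between $\log n$ and $\log N$: the threshold $(\log n)^{1+\epsilon}$ varies with $n$, whereas Markov naturally produces a fixed threshold tied to $\log N$. I would resolve this by splitting the range at $\sqrt N$. For $n \le \sqrt N$ there are only $\sqrt N = o(N)$ values, which are negligible; for $n > \sqrt N$ one has $\log n > \tfrac12 \log N$, so $d(n) > (\log n)^{1+\epsilon}$ forces $d(n) > (\tfrac12\log N)^{1+\epsilon}$, and the Markov bound above applies with $T \asymp (\log N)^{1+\epsilon}$. Combining both ranges shows that the exceptional set has density $o(1)$, i.e.\ $d(n)\le(\log n)^{1+\epsilon}$ for almost all $n$, which is exactly what feeds into Theorem \ref{dnlogn_bound}.

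I do not expect any serious obstacle here, since the argument is a standard first-moment estimate; the only thing that needs care is the quantifier management between ``for almost all $n$'' and the $N$-dependent threshold, which the splitting at $\sqrt N$ cleanly settles. One could alternatively invoke the classical fact that $\log d(n)$ has normal order $(\log 2)\log\log n$, giving the far stronger $d(n) = (\log n)^{\log 2 + o(1)}$ for almost all $n$, but the elementary average bound quoted in the excerpt already suffices for the stated exponent $2+\epsilon$.
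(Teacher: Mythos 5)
Your proposal is correct and takes essentially the same approach as the paper: the paper likewise deduces the corollary from Theorem \ref{dnlogn_bound} combined with the fact that $\sum_{i=1}^{n} d(i) = \mathcal{O}(n\log n)$ implies $d(n) \leq (\log n)^{1+\epsilon}$ for almost all $n$. Your Markov-inequality argument and the splitting at $\sqrt{N}$ simply make explicit the first-moment density step that the paper states without detail.
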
 
By using the aforementioned results of \cite{GroheV06} and \cite{ponomarenko2021pseudofrobenius} we also get:
\begin{corollary}
For connected circulant graphs of order $p^\ell$ for $p$ prime, $\ell > 3$ and vertex degree $\Delta$ smaller than $p$, isomorphism can be tested in \NC.
\end{corollary}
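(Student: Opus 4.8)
The plan is to obtain the corollary purely by composing Theorem~\ref{dnlogn_bound} with the two external results already recalled in the introduction: the distinguishing result of Ponomarenko and Ryabov~\cite{ponomarenko2021pseudofrobenius} and the circuit implementation of bounded-round $k$-WL due to Grohe and Verbitsky~\cite{GroheV06}. The only genuinely arithmetic step is to check that, for $n=p^\ell$, the round bound of Theorem~\ref{dnlogn_bound} collapses to something polylogarithmic. Since the divisors of a prime power $p^\ell$ are exactly $1,p,\dots,p^\ell$, we have $d(p^\ell)=\ell+1$. Writing $\log n=\ell\log p\geq\ell\log 2$ gives $\ell=\mathcal{O}(\log n)$, hence $d(n)=\ell+1=\mathcal{O}(\log n)$; substituting into Theorem~\ref{dnlogn_bound} shows that $2$-WL stabilizes on every circulant graph of order $p^\ell$ after $\mathcal{O}(d(n)\log n)=\mathcal{O}(\log^2 n)$ rounds, which is polylogarithmic.

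Next I would argue that this bounded number of rounds actually suffices to decide isomorphism inside the stated class. By the result of Ponomarenko and Ryabov, $2$-WL identifies every connected circulant graph of order $p^\ell$ with $p$ an odd prime, $\ell>3$ and degree $\Delta<p$, i.e.\ it distinguishes each such graph from every non-isomorphic graph; equivalently, two such graphs are isomorphic if and only if their stable $2$-WL colorings have the same color histogram. (The remaining case $p=2$, $\Delta<2$ is vacuous: a connected graph on $2^\ell\geq 16$ vertices cannot have maximum degree at most $1$, so there is nothing to prove there, and the corollary thus effectively concerns odd primes only.) Because both input graphs reach their stable coloring within the $\mathcal{O}(\log^2 n)$ rounds bounded above, running exactly this many refinement rounds on each graph and comparing the resulting histograms is a correct isomorphism test on the class.

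Finally I would feed this round bound into the circuit simulation. By~\cite{GroheV06} the $r$-round $2$-WL algorithm is computable by a TC circuit of depth $\mathcal{O}(r)$ and size $\mathcal{O}(r\cdot n^{6})$; with $r=\mathcal{O}(\log^2 n)$ the depth is polylogarithmic and the size polynomial, while comparing two color histograms by counting and sorting adds only logarithmic depth and polynomial size. As already noted in the introduction, a polylogarithmic round count places the test in \NC, so isomorphism of the graphs in question can be decided in \NC. There is no deep obstacle here: the whole argument is a composition of the cited theorems, and the only place requiring care is the first paragraph, where everything hinges on the elementary but essential observation that $d(p^\ell)=\ell+1$ grows only logarithmically in $n$. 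It is precisely this that turns the general $\mathcal{O}(d(n)\log n)$ bound of Theorem~\ref{dnlogn_bound} into a polylogarithmic one for prime powers.
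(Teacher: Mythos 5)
Your proposal is correct and follows exactly the route the paper takes: the corollary is stated there as an immediate composition of Theorem~\ref{dnlogn_bound} (with $d(p^\ell)=\ell+1=\mathcal{O}(\log n)$, giving the $\mathcal{O}(\log^2 n)$ round bound noted explicitly at the end of Section~\ref{3.2}), the identification result of \cite{ponomarenko2021pseudofrobenius}, and the TC-circuit simulation of \cite{GroheV06}. Your extra observations---that the $p=2$ case is vacuous and that comparing histograms of the stable colorings is a sound test---merely make explicit details the paper leaves implicit, and do not change the argument.
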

The 1-dimensional version of $k$-WL is commonly known as the \high{Color-Refinement algorithm}. Given a coloring $c$ of the vertices of a graph $G = (V, E)$, the coloring is refined repeatedly by giving each vertex $v$ a new color depending on the label $\{\{c(h) \mid h \in V \text{ with } (h, v) \in E \}\}$ i.e.\ the multiset of colors of vertices with an edge to $v$. This step is repeated until the partition of vertices stabilizes. Note that we define Color-Refinement for directed graphs by only using the in-neighbours. In the case of undirected graphs this coincides with the standard definition.

If we start with a uniform coloring of the vertices this could never be sufficient to test isomorphism with Cayley graphs, as regular graphs are not refined at all and hence regular graphs of the same order and degree cannot be distinguished. Tinhofer used the Color-Refinement to give an algorithm that behaves inversely to the $k$-WL approach: It always distinguishes non-isomorphic graphs but may also classify isomorphic graphs as non-isomorphic. For two graphs $G$ and $H$ the algorithm performs the following steps (see e.g.\ \cite[Section 7]{ArvindKRV17}):
\begin{itemize}
    \item[1.] Give every vertex of the disjoint union of $G$ and $H$ the same color.
    \item[2.] Run the Color Refinement algorithm on the current coloring of $V(G) \cup V(H)$ until it stabilizes.
    \item[3.] If the multisets of colors in $G$ and $H$ are different, decide that $G$ and $H$ are not isomorphic.
    \item[4.] If all vertices of $G$ or equivalently of $H$ are colored uniquely decide that $G$ and $H$ are isomorphic.
    \item[5.] Choose a color class with at least two vertices in both $G$ and $H$, select $v \in V(G)$ and $w \in V(H)$ in this class and give them the same new unique color. Repeat from Step 2.
\end{itemize}
We say that a graph $G$ has the \high{Tinhofer property} if the algorithm above is correct for every $H$ and every choice of vertices to be individualized. Tinhofer showed that this approach works for compact graphs \cite{Tinhofer91}, but Schreck and Tinhofer have also shown in \cite{SchreckT88} that circulant graphs of prime order are rarely compact.

We apply the tools of linear algebra to analyse the outcome of the Color-Refinement algorithm on circulant graphs of prime order after individualizing some vertices. We use this to prove the following theorem, which answers a question posed by Arvind et al. \cite{ArvindKRV17} and gives a first example that Tinhofer's approach to isomorphism testing really works for a non-trivial natural class of graphs: 
\begin{theorem} \label{tinhofer_prop}
Every circulant graph of prime order has the Tinhofer property.
\end{theorem}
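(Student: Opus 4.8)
The plan is to derive the theorem from a single structural statement about how color refinement interacts with the automorphism group, and then to reduce that statement, via the discrete Fourier transform on $\mathbb{Z}_p$, to a question about the eigenvalues of the circulant adjacency matrix. Throughout, write $G = \Cay(\mathbb{Z}_p, S)$ with $p$ prime and $G$ non-trivial (neither complete nor empty), and let $M = \{a \in \mathbb{Z}_p^* \mid aS = S\}$ be its multiplier group. The first ingredient is the classical description of the automorphism group: since $G$ is non-trivial it is not $2$-transitive, so by Burnside's theorem on transitive permutation groups of prime degree $\mathrm{Aut}(G)$ is contained in the affine group $\mathrm{AGL}(1,p)$, and in fact $\mathrm{Aut}(G) = \mathbb{Z}_p \rtimes M$, acting by $x \mapsto ax + b$ with $a \in M$. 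Consequently the stabilizer of a single vertex is a conjugate of $M$, and the pointwise stabilizer of any two distinct vertices is trivial.

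The key reduction uses that Tinhofer's procedure never misclassifies non-isomorphic graphs as isomorphic (a common discrete stable coloring always induces a genuine isomorphism), so the Tinhofer property is equivalent to the absence of false negatives. I would rule these out by maintaining, whenever $G \cong H$ and the algorithm has individualized pairs $(v_1,w_1),\dots,(v_k,w_k)$, the invariant that the partial map $v_i \mapsto w_i$ extends to an isomorphism $\phi\colon G \to H$. The base case is $G \cong H$ itself. For the inductive step, when the algorithm individualizes $v_{k+1}, w_{k+1}$ inside a common stable color class $C$, I need an automorphism $\psi$ fixing $v_1,\dots,v_k$ with $\psi(v_{k+1}) = \phi^{-1}(w_{k+1})$; then $\phi\circ\psi$ extends the enlarged partial map. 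Such a $\psi$ exists precisely when the pointwise stabilizer $\mathrm{Stab}(v_1,\dots,v_k)$ acts transitively on $C$. Everything therefore comes down to the orbit-coherence statement: after individualizing any set of vertices, the stable color classes of $\CR$ coincide with the orbits of their pointwise stabilizer. By the affine structure above, only two depths actually occur in a run on $G$ — one individualized vertex (where the orbits are the $M$-orbits on $\mathbb{Z}_p$) and two individualized vertices (where the orbits are singletons, so $\CR$ must reach the discrete partition) — and these are exactly the ``two appropriately chosen vertices'' of the canonical labeling result.

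For orbit-coherence, the easy inclusion is that $\CR$-colors are constant on orbits, since automorphisms preserve $\CR$-colors. The substance is the converse: distinct orbits must receive distinct colors. Here I would pass to the group ring $\mathbb{Q}[\mathbb{Z}_p]$ and diagonalize the circulant adjacency matrix by the characters $\chi_k(x) = \omega^{kx}$, where $\omega = e^{2\pi i/p}$, with eigenvalues $\lambda_k = \sum_{s \in S}\omega^{ks}$. A direct computation identifies the multiplier group as the exact symmetry of the spectrum, $M = \{a \in \mathbb{Z}_p^* \mid \lambda_{ak} = \lambda_k \text{ for all } k\}$, since equality of all eigenvalues forces $aS = S$. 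Representing the individualized vertex by $e_0 \in \mathbb{Q}[\mathbb{Z}_p]$ and tracking how refinement propagates color from this source, the invariants extracted by $\CR$ get expressed through the coset sums $\sum_{a \in M}\omega^{ac}$ (Gaussian periods) indexed by the cosets of $M$ in $\mathbb{Z}_p^*$, and separating two $M$-orbits becomes the statement that these period data distinguish the corresponding cosets.

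The main obstacle is exactly this last point. Distinct Gaussian periods need not take distinct values, so the crude linear invariant given by walk counts from a single source can collapse different $M$-orbits; the argument must therefore exploit that $\CR$ retains full multiset information at every round — strictly more than the linear walk-count data — together with the cyclicity of $\mathbb{Z}_p^*$, which rigidly constrains the eigenvalue coincidences. Making the combinatorial refinement recover the orbit partition, equivalently showing that the $\CR$-invariant subspace generated by the individualized source closes up to the span of the $M$-orbit indicators, is where the primality of $p$ is essential and where the bulk of the technical work lies; I expect the two-source case yielding discreteness to follow from the same spectral analysis once the single-source case is established.
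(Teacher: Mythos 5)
Your framework is sound and matches the paper's strategy in outline: reduce the Tinhofer property to the statement that the stable color classes after individualization coincide with the orbits of the pointwise stabilizer, note that by the affine structure of the automorphism group only two depths occur (one individualized vertex, classes should be the cosets of the multiplier group $M=H_\Con$; two individualized vertices, classes should be singletons), and attack both via the character basis $e_k$ and the eigenvalues $\lambda_k=\sum_{s\in S}\omega^{ks}$ of the circulant. But the proposal stops exactly where the proof has to begin: neither of the two orbit-coherence statements is actually proved, and these are the entire content of the theorem (the paper's Lemmas \ref{Z_p_one} and \ref{Z_p_two}). Moreover, your assessment of where the difficulty lies is inverted. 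The single-source case is \emph{not} blocked by the obstacle you name: distinct Gaussian periods attached to distinct cosets of $H_\Con$ in $\mathbb{Z}_p^\times$ \emph{do} take distinct values, because $\xi_p,\dots,\xi_p^{p-1}$ are linearly independent over $\mathbb{Q}$, so an equality of two such period sums would force the underlying cosets to coincide (this is precisely the paper's Lemma \ref{eigenvalues_same}). With that observation, the one-vertex case closes by a two-sided bound: projecting $\underline{0}$ onto the eigenspaces produces $d_\Con+1$ independent elements of the stable module (lower bound), while the automorphisms $x\mapsto hx$, $h\in H_\Con$, force every basic set to be a union of cosets (upper bound), whence the classes are exactly the cosets.

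The real gap is the two-vertex case, which you dismiss with ``I expect [it] to follow from the same spectral analysis.'' It does not. After individualizing two vertices the stable module $V$ is no longer generated by eigenspace projections of a single vector, and the same argument only yields $\dim_{\mathbb{C}}V = 1 + d_\Con\cdot d$ for some $d\le|H_\Con|$; nothing in the one-vertex analysis rules out $d<|H_\Con|$, i.e.\ a stable partition strictly coarser than the discrete one. Closing this requires genuinely new tools, which is where the bulk of the paper's work sits: cyclotomic field automorphisms $\sigma_\ell$ lifted to semilinear maps $\chi_\ell$ of $\mathbb{C}[\mathbb{Z}_p]$ that fix every simple quantity, used to show all nonzero eigenspace components of $V$ have the \emph{same} dimension $d$; a counting argument showing that if $d<|H_\Con|$ then the $d+1$ smallest basic sets have total size at most $|H_\Con|$; and finally Chebotar\"ev's theorem that every square submatrix of the Vandermonde matrix $(\xi_p^{ij})_{0\le i,j\le p-1}$ is nonsingular, which converts that size bound into $d+1$ independent vectors inside a $d$-dimensional eigenspace, a contradiction. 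Without an argument of this kind (or some substitute), the proposal establishes the easy inclusion (orbits refine color classes) but not the converse, and hence does not prove the theorem.
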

In particular we show that for non-trivial circulant graphs of prime order at most $2$ vertices need to be individualized until every vertex gets a unique color. The Color Refinement algorithm can be implemented in time $\mathcal{O}(\Delta\, p\log p)$ \cite{Cardon1982PartitioningAG}, where $\Delta$ is the vertex degree. This implies that we can test isomorphism with the same time bound. Moreover, the Tinhofer property implies an algorithm to compute a canonical labeling \cite[Lemma 7.1]{ArvindKRV17}. Therefore, we get an efficient canconical labeling for circulant graphs of prime order:
\begin{corollary}
Circulant graphs of prime order $p$ and vertex degree $\Delta$ admit a canonical labeling computable in time $\mathcal{O}(\Delta \, p \log p)$.
\end{corollary}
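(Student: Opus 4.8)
The plan is to combine Theorem~\ref{tinhofer_prop} with the reduction of \cite[Lemma 7.1]{ArvindKRV17} and then account for the running time. By Theorem~\ref{tinhofer_prop} every circulant graph of prime order has the Tinhofer property, and \cite[Lemma 7.1]{ArvindKRV17} turns the Tinhofer property into a canonical labeling algorithm. Thus the only thing left to establish is that this algorithm can be executed within the claimed bound $\mathcal{O}(\Delta\, p \log p)$.

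First I would isolate the trivial cases. The empty graph and the complete graph on $p$ vertices have automorphism group $S_p$, so any labeling is canonical; these can be detected and labeled in time $\mathcal{O}(\Delta\, p)$. For all remaining (non-trivial) circulant graphs I would rely on the stronger fact, established along the way to proving Theorem~\ref{tinhofer_prop}, that individualizing at most two appropriately chosen vertices and then running Color-Refinement produces a discrete coloring.

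The heart of the time analysis is that the Tinhofer property lets the canonical labeling algorithm proceed along a single branch of the individualization tree rather than branching over a whole color class. Concretely: since a circulant graph is vertex-transitive via the translations of $\mathbb{Z}_p$, the first individualized vertex may be fixed to $0$ without affecting the canonical form; and because the algorithm of \cite{ArvindKRV17} is correct for \emph{every} choice of individualized vertex, the second vertex can be taken to be any vertex of the canonically selected color class, all such choices yielding the same canonical form. Hence the procedure performs only $\mathcal{O}(1)$ Color-Refinement passes (the initial one plus one after each of the at most two individualizations). Each pass runs in time $\mathcal{O}(\Delta\, p \log p)$ by \cite{Cardon1982PartitioningAG}, and reading off the labeling from the final discrete coloring by sorting the vertices according to their (canonical) colors is cheaper, so the total time is $\mathcal{O}(\Delta\, p \log p)$.

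The step I expect to be the main obstacle is justifying the non-branching execution, i.e.\ that the per-step choice of which vertex to individualize within the selected color class is immaterial for the resulting canonical form. This is exactly where the full strength of the Tinhofer property is needed; one has to verify that \cite[Lemma 7.1]{ArvindKRV17} indeed yields a single-thread procedure and that, for non-trivial circulant graphs, the canonically chosen color class at the second level contains a vertex whose individualization discretizes the coloring. Note that these two requirements are compatible: if all vertices of the chosen class give the same canonical form and one of them leads to a discrete coloring, then the common canonical form records a discrete coloring, so every vertex of that class discretizes. Thus the depth-two bound on individualizations genuinely applies, and the $\mathcal{O}(1)$ count of refinement passes is justified.
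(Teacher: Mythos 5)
Your proposal is correct and takes essentially the same route as the paper: it combines Theorem~\ref{tinhofer_prop} with \cite[Lemma 7.1]{ArvindKRV17}, the fact that at most two individualizations suffice to discretize non-trivial circulant graphs of prime order (Lemmas~\ref{Z_p_one} and~\ref{Z_p_two}), and the $\mathcal{O}(\Delta\, p \log p)$ bound for Color Refinement from \cite{Cardon1982PartitioningAG}. Your extra care about the non-branching execution and the trivial (empty and complete) cases merely makes explicit what the paper leaves implicit.
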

Finally, we present an undirected Cayley graph over $\mathbb{Z}_4 \times \mathbb{Z}_4$ which does not have the Tinhofer property, showing that not every (undirected) Cayley graph over an (abelian) group has the Tinhofer property.
\paragraph{Related work.} The isomorphism problem for the class of all circulant graphs is solvable in polynomial time due to Evdokimov and Ponomarenko \cite{EvdokimovP04} and independently Muzychuk \cite{muzychuk2004solution} (however the setting of the latter is that the graphs are given by a connection set of integers instead of by an adjacency matrix). Whether or not this can be done by $k$-WL remains widely open. In \cite{EvdokimovP02} Evdokimov and Ponomarenko investigate \emph{normal} circulant graphs, whose automorphisms are induced by automorphisms of the underlying cyclic group. Since non-trivial circulant graphs of prime order are normal, their analysis implies an analog of our Theorem \ref{tinhofer_prop} for the weaker version of Tinhofer's property where Color Refinement is replaced with $2$-WL.

\paragraph{Organization of the paper.} In Section \ref{2} we give basic algebraic definitions needed for our analysis of 2-WL and Color-Refinement. Then we algebraically interpret $2$-WL on Cayley graphs in Section \ref{3.1} and use this to prove Theorem \ref{wl_steps_cyclic_bound} in Section \ref{3.2}, which is a slightly more general variant of Theorem \ref{dnlogn_bound}. Finally, in Section \ref{4} we investigate the Color-Refinement, prove Theorem \ref{tinhofer_prop} and give an example of a Cayley graph over $\mathbb{Z}_4 \times \mathbb{Z}_4$ that does not have the Tinhofer property.

\paragraph{Acknowledgement.}
I would like to thank Dr.\ Oleg Verbitsky for a lot of helpful comments and especially for providing the context for the results of this paper.

\section{S-modules and partitions} \label{2}
Let $K$ be a  field and $G$ a finite group. Let $e \in G$ be the neutral element.
The \high{group ring} $K[G]$ is defined to be the vector space over $K$ with basis elements $\underline{g}$ for $g \in G$ (i.e.\ we write elements of $K[G]$ as $\sum_{g\in G} \lambda_g \underline{g}$ with all $\lambda_g \in K$), endowed with the multiplication given by:
$$(\sum_{g \in G}\lambda_g \underline{g}) \cdot (\sum_{g \in G} \mu_g \underline{g}) := \sum_{g_1, g_2 \in G} \lambda_{g_1} \mu_{g_2} \underline{g_1g_2}$$
This is a ring with unit $\underline{e} \in K[G]$ and it is commutative precisely if $G$ is abelian.
\begin{definition}
For a subset $T \subseteq G$ we define the \high{simple quantity} $\underline{T} := \sum_{g \in T} \underline{g} $. \\
For a map $\phi : G \rightarrow G$ we define
$$(\sum_{g\in G} \lambda_g \underline{g})^\phi := \sum_{g\in G} \lambda_g \underline{\phi(g)}$$
In particular, for integers $m$ we have the map $G \rightarrow G$: $g \mapsto g^m$ (which is bijective for $m$ coprime to $|G|$) and for this we also write:
$$(\sum_{g\in G} \lambda_g \underline{g})^{(m)} := \sum_{g\in G} \lambda_g \underline{g^m}$$
Similarly we define for any $T \subseteq G$: $T^\phi := \{\phi(g) \mid g \in T\}$ and $T^{(m)} := \{g^m\mid g \in T\}$. 
\end{definition}
In the following we will define S-modules and prove a few basic properties also found in Sections 22 and 23 of \cite{Wie}.
\begin{definition}
We call a linear subspace $V \subseteq K[G]$ a \high{Schur-module} (or \high{S-module}) if there is some partition $\mathcal{C} = \{C_1, \dots C_r\}$ of $G$ (i.e.\ $\bigcup_{1 \leq i \leq r} C_i = G$ and the $C_i$ are pairwise disjoint and non-empty) such that $\{\underline{C_i}\}_{1\leq i \leq r}$ is a basis of $V$. We call the sets $C_i$ the \high{basic sets} of $V$, and as the partition is unique we denote $\Basic(V) := \mathcal{C}$. Every $g\in G$ is contained in exactly one set $C_i$ and we write $C_{(g)} := C_i$. \\
If further $\underline{e} \in V$, $V$ is closed under multiplication (induced by $K[G]$) and for any $C \in Basic(V)$ we have $C^{(-1)} \in Basic(V)$, then $V$ is known as a \high{Schur-Ring} (or \high{S-Ring}).
\end{definition}
The S-modules are basically equivalent to partitions of $G$, but they provide a natural product operation that incorporates the group structure of $G$ and they are linearized, which allows to use the tools of linear algebra.\\  
A basic property of S-modules is that elements of $G$ which are in the same basic set have the same coefficient in any element of the S-module. This allows us to extract and combine elements:
\begin{lemma} \label{S_module_extract}
Let $V$ be an S-module and let $v_i = \sum_{g \in G} \lambda_{i,g} \underline{g}$ be elements of $V$ for $1 \leq i \leq m$. Then for any map $\phi : K^m \rightarrow K$ also $\sum_{g \in G} \phi(\lambda_{1,g}, \dots, \lambda_{m,g}) \underline{g}$ is an element of $V$.
\end{lemma}
\begin{proof}
Let $C_1, \dots, C_r$ be the basic sets of $V$. For every $1 \leq i \leq m$  $v_i$ is an element of $V$, so we can write $v_i = \sum_{1\leq j \leq r} \mu_{i,j} \underline{C_j}$ for some $\mu_{i,j} \in K$. Given any $g \in C_j$ comparing coefficients of $\underline{g}$ in $v_i$ gives $\lambda_{i,g} = \mu_{i,j}$, as $\underline{g}$ only appears in $\underline{C_j}$. But then $\phi(\lambda_{1,g}, \dots, \lambda_{m,g}) = \phi(\mu_{1,j}, \dots, \mu_{m,j})$ implies $\sum_{g\in G}\phi(\lambda_{1,g}, \dots, \lambda_{m,g})\underline{g} =\sum_{1\leq j \leq r}\phi(\mu_{1,j}, \dots, \mu_{m,j}) \underline{C_j} \in V$ as desired.
\end{proof}
In particular this allows us to extract the parts of $v \in V$ with different coefficients:
\begin{corollary} \label{extract_coefficients}
Let $V$ be an S-module, $v = \sum_{g \in G} \lambda_g \underline{g}$ an element of $V$ and $\lambda \in K$. If $T$ is the set of all $g$ which appear with coefficient $\lambda$ in $v$, then also $\underline{T} \in V$.
\end{corollary}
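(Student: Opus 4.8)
The plan is to derive this immediately from Lemma~\ref{S_module_extract} by choosing a single input vector and an appropriate (non-linear) extraction function $\phi$. Concretely, I would apply the lemma with $m = 1$ and $v_1 = v$, so that $\lambda_{1,g} = \lambda_g$ for every $g \in G$.

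The key idea is to take $\phi : K \to K$ to be the indicator function of the scalar $\lambda$, that is, $\phi(x) = 1$ if $x = \lambda$ and $\phi(x) = 0$ otherwise. With this choice, $\phi(\lambda_g) = 1$ exactly when $g \in T$ and $\phi(\lambda_g) = 0$ otherwise, so that $\sum_{g \in G} \phi(\lambda_g) \underline{g} = \sum_{g \in T} \underline{g} = \underline{T}$. Lemma~\ref{S_module_extract} then yields $\underline{T} \in V$, which is exactly the claim.

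I expect no real obstacle here; the only point worth emphasizing is that $\phi$ is permitted to be an arbitrary, in particular non-linear, function of the coefficients. This is precisely the feature of Lemma~\ref{S_module_extract} that makes the corollary go through: linearity of $V$ alone would not suffice to isolate the elements carrying a prescribed coefficient, but the lemma has already encapsulated the fact that elements of a common basic set share the same coefficient in every member of $V$, so applying any set-theoretic test coefficient-wise keeps us inside $V$.
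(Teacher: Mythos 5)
Your proof is correct and is essentially identical to the paper's own argument: both apply Lemma~\ref{S_module_extract} with $m=1$ to $v$ using the indicator function $\phi(x) = 1$ if $x = \lambda$ and $\phi(x) = 0$ otherwise.
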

\begin{proof}
This follows immediately by applying Lemma \ref{S_module_extract} to $v$ with $\phi(x) = \begin{cases}
1 & x = \lambda \\
0 & x \neq \lambda
\end{cases}$.
\end{proof}
And we can also take intersections of simple quantities in an S-module:
\begin{corollary} \label{s_module_intersections}
Let $V$ be an S-module and $T_1, T_2$ subsets of $G$ with $\underline{T_1}, \underline{T_2} \in V$. Then also $\underline{T_1 \cap T_2} \in V$ and $\underline{T_1 \cup T_2} \in V$.
\end{corollary}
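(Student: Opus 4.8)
The plan is to derive both statements directly from Lemma~\ref{S_module_extract} by choosing suitable combining maps $\phi$, exploiting the fact that $\underline{T_1}$ and $\underline{T_2}$ have only $0/1$ coefficients. Writing $\underline{T_1} = \sum_{g\in G}\lambda_{1,g}\,\underline{g}$ and $\underline{T_2} = \sum_{g\in G}\lambda_{2,g}\,\underline{g}$, the coefficient $\lambda_{i,g}$ equals $1$ precisely when $g\in T_i$ and $0$ otherwise. Hence membership of $g$ in $T_1\cap T_2$ or in $T_1\cup T_2$ is completely determined by the pair $(\lambda_{1,g},\lambda_{2,g})\in\{0,1\}^2$, which is exactly the pointwise information that Lemma~\ref{S_module_extract} lets us process.

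For the intersection I would take $\phi:K^2\to K$ to be the product map $\phi(x,y)=xy$. On the relevant inputs $(\lambda_{1,g},\lambda_{2,g})\in\{0,1\}^2$ this returns $1$ exactly when both coordinates equal $1$, i.e.\ exactly when $g\in T_1\cap T_2$. Thus Lemma~\ref{S_module_extract} yields $\sum_{g\in G}\phi(\lambda_{1,g},\lambda_{2,g})\,\underline{g} = \underline{T_1\cap T_2}\in V$.

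For the union there are two equally short routes. One is to apply Lemma~\ref{S_module_extract} again with the Boolean OR map, e.g.\ $\phi(x,y)=x+y-xy$, which on $\{0,1\}^2$ evaluates to $1$ precisely when $x=1$ or $y=1$; the other is to note that $V$ is a linear subspace and invoke inclusion–exclusion,
$$\underline{T_1\cup T_2} = \underline{T_1} + \underline{T_2} - \underline{T_1\cap T_2},$$
whose right-hand side lies in $V$ because the first two summands are given and the third was just shown to belong to $V$.

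I do not expect any real obstacle here: the only point to verify is that the chosen maps $\phi$ produce the correct $0/1$ pattern on the restricted domain $\{0,1\}^2$, which is immediate from a two-by-two case check. All the genuine content is carried by Lemma~\ref{S_module_extract}, and the same argument extends verbatim to any finite Boolean combination of simple quantities contained in $V$.
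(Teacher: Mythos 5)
Your proposal is correct and follows essentially the same route as the paper: both apply Lemma~\ref{S_module_extract} to $\underline{T_1},\underline{T_2}$ with a map $\phi$ realizing Boolean AND (resp.\ OR) on the $0/1$ coefficients; whether $\phi$ is written as a case-defined indicator (as in the paper) or as the polynomials $xy$ and $x+y-xy$ (as you do) is immaterial, since they agree on $\{0,1\}^2$. Your inclusion--exclusion alternative for the union is also fine and equally close in spirit.
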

\begin{proof}
This follows immediately by applying Lemma \ref{S_module_extract} to $\underline{T_1}$ and $\underline{T_2}$ with the maps $\phi(x,y) = \begin{cases}
1 & x = 1 \wedge y = 1 \\
0 & \text{otherwise}
\end{cases}$ and $\phi(x,y) = \begin{cases}
1 & x = 1 \vee y = 1 \\
0 & \text{otherwise}
\end{cases}$ respectively.
\end{proof}
\begin{definition}
The \high{meet} of two partitions $\mathcal{P} = \{P_1, \dots, P_n\}$ and $\mathcal{P}'= \{P_1', \dots, P_m'\}$ is the coarsest common subpartition given by $\mathcal{P} \wedge \mathcal{P}' = \{P_i \cap P_j' \mid 1\leq i \leq n, 1\leq j \leq m\}$.
\end{definition}
\begin{definition}
Given a partition $\mathcal{P} = \{P_1, \dots, P_n\}$ of a set $X$ and $x\in X$, we also write $\mathcal{P}(x) := P_i$ for the unique set $P_i$ with $x \in P_i$ and $1\leq i \leq n$.
\end{definition}
\section{2-WL on circulant graphs} 
\subsection{General constructions}\label{3.1}
In this section we interpret the 2-WL algorithm on Cayley graphs of finite groups in terms of operations on S-modules. For this $G$ is still an arbitrary finite group, but we pick $K= \mathbb{Q}$ (the important part is that we pick a field of characteristic $0$, as otherwise we would loose information about the coefficients in the products of basic quantities in $K[G]$).\\
The central notion in the context of the 2-WL algorithm is that of a refinement, which is going to correspond to a single refinement step of the 2-WL algorithm:
\begin{definition}For any element $v = \sum_{g \in G}\lambda_g \underline{g} \in \mathbb{Q}[G]$ we define the induced partition on $G$ given by the elements of $G$ with identical coefficients $\mathcal{C}_v := \{\{g \in G \mid \lambda_g = \lambda\} \mid \lambda \in \mathbb{Q}\}$.\\
Let $V$ be an S-module with basic sets $\Basic(V) = \{T_1, \dots, T_r\}$ where the $T_i$ are pairwise distinct. We define the \high{refinement} of $V$ to be the S-module $R(V)$ corresponding to the partition $\Basic(V) \wedge \bigwedge_{1 \leq i, j \leq r} \mathcal{C}_{\underline{T_i} \cdot \underline{T_j}}$.
\end{definition}
The refinement of an S-module is the smallest S-module that contains all products of two elements (and as it is an S-module, also all elements derived from Corollary \ref{extract_coefficients} and \ref{s_module_intersections}):
\begin{lemma} \label{refinement_product}
Let $V$ be an S-module and $v, w$ elements of $V$. Then the product $v \cdot w$ in $\mathbb{Q}[G]$ is an element of $R(V)$.
\end{lemma}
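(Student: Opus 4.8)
The plan is to exploit the bilinearity of the multiplication in $\mathbb{Q}[G]$ together with the concrete description of $R(V)$ as the S-module attached to the partition $\mathcal{P} := \Basic(V) \wedge \bigwedge_{1 \le i,j \le r} \mathcal{C}_{\underline{T_i}\cdot\underline{T_j}}$. First I would record the basic characterization of membership in $R(V)$: since $\{\underline{P}\}_{P \in \mathcal{P}}$ is by definition a basis of $R(V)$, an element $u = \sum_{g \in G} \mu_g \underline{g}$ lies in $R(V)$ if and only if it is a linear combination of the $\underline{P}$, which happens precisely when its coefficient function $g \mapsto \mu_g$ is constant on each block $P \in \mathcal{P}$. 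This reduces membership to a statement about coefficients being constant on blocks.

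Next, since $v, w \in V$ and $\{\underline{T_i}\}_{1 \le i \le r}$ is a basis of $V$, I would write $v = \sum_i a_i \underline{T_i}$ and $w = \sum_j b_j \underline{T_j}$ with scalars $a_i, b_j \in \mathbb{Q}$, and expand $v \cdot w = \sum_{i,j} a_i b_j \, (\underline{T_i} \cdot \underline{T_j})$ using bilinearity of the product. Because $R(V)$ is a linear subspace of $\mathbb{Q}[G]$, it suffices to prove that each individual basic product $\underline{T_i} \cdot \underline{T_j}$ already belongs to $R(V)$; the full claim then follows by taking the linear combination.

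The key observation is that $\underline{T_i} \cdot \underline{T_j}$ lies in $R(V)$ for every pair $i, j$. Indeed, by the very definition of $\mathcal{C}_{\underline{T_i}\cdot\underline{T_j}}$, the coefficients of $\underline{T_i} \cdot \underline{T_j}$ are constant on each block of $\mathcal{C}_{\underline{T_i}\cdot\underline{T_j}}$. Since $\mathcal{P}$ is a meet that contains $\mathcal{C}_{\underline{T_i}\cdot\underline{T_j}}$ among its factors, $\mathcal{P}$ is a subpartition of $\mathcal{C}_{\underline{T_i}\cdot\underline{T_j}}$, so every block of $\mathcal{P}$ is contained in a single block of $\mathcal{C}_{\underline{T_i}\cdot\underline{T_j}}$. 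Constancy of the coefficient function on the coarser blocks therefore forces constancy on the finer blocks of $\mathcal{P}$, and by the characterization from the first paragraph we conclude $\underline{T_i} \cdot \underline{T_j} \in R(V)$. Combining this with the expansion $v \cdot w = \sum_{i,j} a_i b_j \, (\underline{T_i}\cdot\underline{T_j})$ and the fact that $R(V)$ is closed under linear combinations yields $v \cdot w \in R(V)$.

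I do not expect a genuine obstacle in this argument; the only point requiring care is getting the direction of the refinement right, namely that the meet is finer than each of its factors, so that constancy on the coarse blocks of $\mathcal{C}_{\underline{T_i}\cdot\underline{T_j}}$ propagates to the fine blocks of $\mathcal{P}$ rather than the reverse. It is also worth checking that the products $\underline{T_i}\cdot\underline{T_j}$ indexing the meet in the definition of $R(V)$ are exactly the products appearing in the expansion of $v \cdot w$; this holds since both indices $i$ and $j$ range over all basic sets of $V$.
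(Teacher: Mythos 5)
Your proposal is correct and follows essentially the same route as the paper: expand $v \cdot w = \sum_{i,j} a_i b_j (\underline{T_i}\cdot\underline{T_j})$ by bilinearity, note that each $\underline{T_i}\cdot\underline{T_j}$ lies in $R(V)$, and conclude since $R(V)$ is a linear subspace. The only difference is that the paper dismisses the middle step with ``by definition,'' whereas you spell out why the meet partition refines each $\mathcal{C}_{\underline{T_i}\cdot\underline{T_j}}$ and hence why constancy of coefficients propagates; this is a harmless (and careful) elaboration of the same argument.
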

\begin{proof}
Let $\Basic(V) = \{T_1, \dots, T_r\}$. By definition $\underline{T_i} \cdot \underline{T_j} \in R(V)$ for every $1 \leq i,j \leq r$. The $\underline{T_i}$ form a basis of $V$ so we can write $v = \sum_{1\leq i \leq r}\mu_i \underline{T_i}$ and $w = \sum_{1\leq i \leq r}\mu_i' \underline{T_i}$ with $\mu_i, \mu_i' \in \mathbb{Q}$. Then $v \cdot w = \sum_{1\leq i,j \leq r} \mu_i\mu_j (\underline{T_i}\cdot \underline{T_j}) \in R(V)$ as $R(V)$ is a vector space.
\end{proof}
\begin{lemma} \label{subset_refinement}
Let $V, W$ be S-modules such that $V \subseteq W$. Then also $R(V) \subseteq R(W)$.
\end{lemma}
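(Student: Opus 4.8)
The plan is to reduce the claimed inclusion of S-modules to a membership statement to which Lemma \ref{refinement_product} applies directly. Write $\Basic(V) = \{T_1, \dots, T_r\}$. Since $V \subseteq W$, each basic quantity $\underline{T_i}$ lies in $W$; this is the only consequence of the hypothesis I will use. The target $R(V) \subseteq R(W)$ is equivalent to showing that every basic quantity $\underline{C}$ with $C \in \Basic(R(V))$ belongs to $R(W)$, because an S-module is spanned by the simple quantities of its basic sets. So the whole argument comes down to locating the generators of $R(V)$ inside $R(W)$.

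First I would record the two families of elements that cut out $\Basic(R(V))$. By definition $\Basic(R(V))$ is the meet $\Basic(V) \wedge \bigwedge_{i,j} \mathcal{C}_{\underline{T_i}\cdot\underline{T_j}}$, and $\Basic(V)$ is itself the common coefficient-partition of the indicators $\underline{T_1}, \dots, \underline{T_r}$. Hence $\Basic(R(V))$ is exactly the partition on which the coefficient vector of the finite family
$$\{\underline{T_i} : 1\le i \le r\} \cup \{\underline{T_i}\cdot\underline{T_j} : 1 \le i, j \le r\}$$
is constant. I would then check that every member of this family lies in $R(W)$: for the $\underline{T_i}$ this follows from $\underline{T_i} \in W \subseteq R(W)$ (the defining partition of $R(W)$ refines $\Basic(W)$, so $W \subseteq R(W)$), and for the products $\underline{T_i}\cdot\underline{T_j}$ it follows from Lemma \ref{refinement_product}, since $\underline{T_i}, \underline{T_j} \in W$ and that lemma places products of elements of $W$ into $R(W)$.

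To finish, I would feed this family into Lemma \ref{S_module_extract}, applied to the S-module $R(W)$. For a fixed basic set $C \in \Basic(R(V))$, the coefficient vectors of the listed elements take one common value on $C$ and a different one off $C$, so choosing $\phi$ to be the indicator of that value yields $\underline{C} \in R(W)$. As $C$ was arbitrary, all basic quantities of $R(V)$ lie in $R(W)$, whence $R(V) \subseteq R(W)$.

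The step I expect to require the most care is the bookkeeping that identifies $\Basic(R(V))$ with the coefficient-partition of the explicit finite family above, i.e.\ correctly translating between the partition-meet definition of the refinement and the element-extraction mechanism of Lemma \ref{S_module_extract} (Corollaries \ref{extract_coefficients} and \ref{s_module_intersections} are exactly the special cases that make this translation legitimate). Everything else — the passage from $V \subseteq W$ to $\underline{T_i} \in W$ and the single invocation of Lemma \ref{refinement_product} — is immediate.
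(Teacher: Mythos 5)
Your proof is correct and follows essentially the same route as the paper: express each basic set of $R(V)$ as a cell of the coefficient partition determined by the $\underline{T_i}$ and the products $\underline{T_i}\cdot\underline{T_j}$, place all of these elements in $R(W)$ via $W \subseteq R(W)$ and Lemma \ref{refinement_product}, and extract the basic quantity with Lemma \ref{S_module_extract}. The only cosmetic difference is that you feed all the indicators $\underline{T_1},\dots,\underline{T_r}$ into the extraction, while the paper uses the single relevant $\underline{T_\ell}$; both are equally valid.
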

\begin{proof}
Let $\Basic(V) = \{T_1, \dots, T_r\}$. By definition an element $T \in \Basic(R(V))$ is of the form $T = T_\ell \cap \bigcap_{1\leq i,j \leq r} T_{i,j}$ for some $1\leq \ell \leq r$ and where the $T_{i,j}$ are the elements of $G$ which appear in $\underline{T_i}\cdot \underline{T_j}$ with coefficient $\lambda_{i,j}\in \mathbb{Q}$. We have $\underline{T_i} \in V \subseteq W$ and thus also $\underline{T_i}\cdot \underline{T_j} \in R(W)$ by Lemma \ref{refinement_product}. Applying Lemma \ref{S_module_extract} in $R(W)$ to $\underline{T_\ell}, \underline{T_{1}}\cdot \underline{T_1}, \underline{T_{1}}\cdot \underline{T_2}, \dots  ,\underline{T_r}\cdot \underline{T_r}$ with $$\phi(x,x_{1,1}, x_{1,2}, \dots , x_{r,r}) = \begin{cases}
1 & x = 1 \wedge \bigwedge_{1\leq i, j \leq r} x_{i,j} = \lambda_{i,j} \\
0 & \text{otherwise} 
\end{cases}$$ implies $\underline{T} \in \Basic(R(W))$. Hence $R(V) \subseteq R(W)$.
\end{proof}
The 2-WL algorithm acts on partitions of pairs of vertices, which corresponds to $G\times G$ in the case of Cayley graphs. The partitions that will appear are going to satisfy some extra properties, which allow us to go back to a partition of $G$ and hence to an S-module:
\begin{definition}
Let $\mathcal{C}$ be a partition of $G \times G$. We say that $\mathcal{C}$ is a \high{Cayley partition} if the following hold:
\begin{itemize}
    \item for every $P \in \mathcal{C}$ we have that $(g_1, g_2) \in P$ if and only if $(g_1g, g_2g) \in P$ for all $g \in G$,
    \item there is some $P \in \mathcal{C}$ with $ P = \{(g, g) \mid g \in G\}$,
    \item for every $P \in \mathcal{C}$ there is some $P' \in \mathcal{C}$ with $P' = \{(g_2, g_1) \mid (g_1, g_2) \in P\}$.
\end{itemize}
\end{definition}
\begin{definition}
Let $\mathcal{C} = \{P_1, \dots, P_r\}$ be any partition of $G \times G$.  
For every $(g_1, g_2) \in G\times G$ we consider the new label $L(g_1, g_2)$ given by the multiset of colors of triangles (where as the color of an edge we understand the set of $\mathcal{C}$ an edge is in) we obtain from $(g_1, g_2)$ by adding a third vertex $g$:   $$L(g_1, g_2) := (\mathcal{C}((g_1, g_2)), \{\{(\mathcal{C}((g_1, g)), \mathcal{C}((g, g_2))) \mid g \in G \}\})$$ One step of the \high{2-WL algorithm} on $\mathcal{C}$ (where we interpret $\mathcal{C}$ as the tuple color classes of a graph with vertices $G$) is defined to be the partition $\WL_2(\mathcal{C})$ induced by splitting elements with different labels $L$. The sets of $\WL_2(\mathcal{C})$ are then given by $\{(g_1', g_2') \in G\times G \mid L(g_1,g_2) = L(g_1', g_2')\}$ for $(g_1, g_2) \in G\times G$.
\end{definition}
\begin{lemma}
Let $\phi : G \rightarrow G$ be a bijective map. If $\mathcal{C} = \{P_1, \dots, P_r\}$ is a partition of $G\times G$ that satisfies $(g_1, g_2) \in P_i \Leftrightarrow (\phi(g_1), \phi(g_2)) \in P_i$ for all $g_1, g_2 \in G$ and $1\leq i \leq r$ then also $\WL_2(\mathcal{C})$ satisfies this. In particular, if $\mathcal{C}$ is a Cayley partition then so is $\WL_2(\mathcal{C})$.
\end{lemma}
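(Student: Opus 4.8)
The plan is to reduce everything to a single identity on labels. Since the parts of $\WL_2(\mathcal{C})$ are by definition exactly the label classes $\{(g_1', g_2') : L(g_1', g_2') = L(g_1, g_2)\}$, the invariance of $\WL_2(\mathcal{C})$ under $\phi$ is equivalent to showing that $L(g_1, g_2) = L(\phi(g_1), \phi(g_2))$ for all $g_1, g_2 \in G$: this says precisely that $(g_1, g_2)$ and $(\phi(g_1), \phi(g_2))$ always lie in the same part. So I would first establish this label identity, and then read off each of the three Cayley-partition properties from it (or from a small variant of the argument).

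To prove $L(g_1, g_2) = L(\phi(g_1), \phi(g_2))$, I compare the two components of the label separately. The first component is $\mathcal{C}((g_1, g_2))$, which equals $\mathcal{C}((\phi(g_1), \phi(g_2)))$ directly by the hypothesis on $\mathcal{C}$. For the multiset component I would reindex by the substitution $g = \phi(g')$; since $\phi$ is a bijection, $g$ ranges over all of $G$ exactly when $g'$ does. Applying the hypothesis to the tuples $(g_1, g)$ and $(g, g_2)$ gives $\mathcal{C}((g_1, g)) = \mathcal{C}((\phi(g_1), \phi(g)))$ and $\mathcal{C}((g, g_2)) = \mathcal{C}((\phi(g), \phi(g_2)))$, so each summand $(\mathcal{C}((g_1, g)), \mathcal{C}((g, g_2)))$ equals $(\mathcal{C}((\phi(g_1), \phi(g))), \mathcal{C}((\phi(g), \phi(g_2))))$; writing $g' = \phi(g)$ then turns the multiset for $L(g_1, g_2)$ into exactly the multiset defining $L(\phi(g_1), \phi(g_2))$. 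This settles the first assertion.

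For the ``in particular'' I would treat the three defining properties in turn, using that $\WL_2(\mathcal{C})$ refines $\mathcal{C}$ (immediate, since the first label component is $\mathcal{C}((g_1, g_2))$). Translation invariance is cleanest: each right translation $\phi_g \colon x \mapsto xg$ is a bijection, so the first part applied to $\phi = \phi_g$ yields invariance of $\WL_2(\mathcal{C})$ under every $\phi_g$, which is exactly the first bullet. For the diagonal, translation invariance gives $L(e,e) = L(g,g)$ for all $g$, so the whole diagonal lies in a single part of $\WL_2(\mathcal{C})$; as $\WL_2(\mathcal{C})$ refines $\mathcal{C}$ and the diagonal is a part of $\mathcal{C}$, that part must equal the diagonal.

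The main obstacle is the transpose-closure property, because the transpose $(g_1, g_2) \mapsto (g_2, g_1)$ is not of the diagonal form $x \mapsto \phi(x)$ and the first part does not apply directly. Here I would argue that $L(g_2, g_1)$ is a fixed bijective function of $L(g_1, g_2)$. Writing $P^{T}$ for the transpose of a part $P$ (again a part of $\mathcal{C}$, by the third bullet for $\mathcal{C}$), the hypothesis gives $\mathcal{C}((g_2, g_1)) = \mathcal{C}((g_1, g_2))^{T}$, and in each summand $\mathcal{C}((g_2, g)) = \mathcal{C}((g, g_2))^{T}$ and $\mathcal{C}((g, g_1)) = \mathcal{C}((g_1, g))^{T}$. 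Thus $L(g_2, g_1)$ arises from $L(g_1, g_2)$ by applying the involution $(A, B) \mapsto (B^{T}, A^{T})$ to every entry of the multiset (and $P \mapsto P^{T}$ to the first component), a transformation depending only on $\mathcal{C}$. Being a bijection, it both preserves and reflects equality of labels, so the transpose carries parts of $\WL_2(\mathcal{C})$ to parts of $\WL_2(\mathcal{C})$. This establishes the third property and completes the proof that $\WL_2(\mathcal{C})$ is again a Cayley partition.
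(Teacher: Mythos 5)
Your proof is correct and follows essentially the same route as the paper: the core step is the identical reindexing computation showing $L(g_1,g_2)=L(\phi(g_1),\phi(g_2))$, and the Cayley properties are then derived exactly as the paper does (right translations for the first two bullets, and for transpose-closure the observation that $L(g_1,g_2)=L(g_1',g_2')$ forces $L(g_2,g_1)=L(g_2',g_1')$). Your explicit involution argument $(A,B)\mapsto(B^{T},A^{T})$ and the refinement argument for the diagonal merely spell out details the paper's proof leaves implicit.
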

\begin{proof}
The hypothesis says $\mathcal{C}((g_1, g_2)) = \mathcal{C}((\phi(g_1), \phi(g_2)))$ for any $(g_1, g_2) \in G \times G$. Then we compute for any $(g_1, g_2) \in G\times G$:
\begin{align*}
    \hspace{-7.5pt}L(\phi(g_1), \phi(g_2)) &= (\mathcal{C}((\phi(g_1), \phi(g_2))), \{\{(\mathcal{C}((\phi(g_1), g)), \mathcal{C}((g, \phi(g_2)))) \mid g \in G \}\}) \\
    &= (\mathcal{C}((\phi(g_1), \phi(g_2))), \{\{(\mathcal{C}((\phi(g_1), \phi(g))), \mathcal{C}((\phi(g), \phi(g_2)))) \mid g \in G \}\}) \\
    &= (\mathcal{C}((g_1, g_2)), \{\{(\mathcal{C}((g_1, g)), \mathcal{C}((g, g_2))) \mid g \in G \}\}) \\
    &= L(g_1, g_2)
\end{align*}
This immediately implies $\WL_2(\mathcal{C})((\phi(g_1),\phi(g_2)) = \WL_2(\mathcal{C})((g_1, g_2))$, i.e.\ the desired property. 

Finally, for a fixed $g\in G$ the map $\phi : G\rightarrow G$: $g' \mapsto g'g$ is bijective and hence what we just showed proves that the first two properties of being a Cayley partition are fulfilled. The third property follows similarly by observing that if $L(g_1, g_2) = L(g_1', g_2')$ then also $L(g_2, g_1) = L(g_2', g_1')$.
\end{proof}
For a Cayley partition we have a lot of redundant information in any of the sets. Therefore we can pick out only one of the the equivalent elements $(g_1g, g_2g)$ in every set and thus reduce to a partition of $G$, which we interpret as an S-module: 
\begin{definition}
To every Cayley partition $\mathcal{C} = \{P_1, \dots, P_n\}$ of $G \times G$ we associate the \high{induced S-module} $S(\mathcal{C})$ given by the basic sets $T_i := \{ a \in G \mid (e, a) \in P_i\}$. 
\end{definition}
In this context the refinement of S-modules corresponds to the steps of the $2$-WL algorithm, similar to how S-rings correspond to the output of the $2$-WL algorithm:
\begin{lemma} \label{wl_refinement}
Let $\mathcal{C}$ be a Cayley partition of $G \times G$. Then $S(\WL_2(\mathcal{C})) = R(S(\mathcal{C}))$ i.e.\ the refinement of the induced S-module is precisely the induced S-module after applying the $2$-WL algorithm once.
\end{lemma}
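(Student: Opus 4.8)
The plan is to show that the two S-modules coincide by proving their underlying partitions of $G$ are literally equal: since an S-module is determined by its basic sets, it suffices to show that $a, a' \in G$ lie in the same basic set of $S(\WL_2(\mathcal{C}))$ if and only if they lie in the same basic set of $R(S(\mathcal{C}))$. First I would exploit the Cayley property of $\mathcal{C}$. Writing $\Basic(S(\mathcal{C})) = \{T_1, \dots, T_r\}$ and letting $T_{(a)}$ denote the basic set containing $a$, the property applied with $g = g_1^{-1}$ gives $(g_1, g_2) \in P_i \Leftrightarrow (e, g_2 g_1^{-1}) \in P_i$, so that the color $\mathcal{C}((g_1, g_2))$ depends only on $g_2 g_1^{-1}$ and equals the part corresponding to $T_{(g_2 g_1^{-1})}$. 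By the previous lemma $\WL_2(\mathcal{C})$ is again a Cayley partition, so each of its parts has a representative of the form $(e, a)$; hence the basic sets of $S(\WL_2(\mathcal{C}))$ are exactly the classes of the relation $a \sim a' \Leftrightarrow L(e, a) = L(e, a')$, and I only need to analyse the label at $(e, a)$.

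The crux is to unfold this label and match it to structure constants. Using the reduction above,
$$L(e, a) = \bigl(T_{(a)},\ \{\{ (T_{(g)},\, T_{(a g^{-1})}) \mid g \in G \}\}\bigr),$$
so the multiset records, for each ordered pair $(T_i, T_j)$, the count $N_{ij}(a) = |\{g \in G : g \in T_i \text{ and } a g^{-1} \in T_j\}|$. The key observation is that the substitution $s = g \in T_i$, $t = a g^{-1} \in T_j$ (so $a = ts$) identifies $N_{ij}(a)$ with the coefficient of $\underline{a}$ in the product $\underline{T_j} \cdot \underline{T_i}$ in $\mathbb{Q}[G]$. Consequently the multiset part of $L(e, a)$ carries exactly the same information as the tuple of coefficients $(\text{coeff}_a(\underline{T_j}\cdot\underline{T_i}))_{1 \le i,j \le r}$.

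Finally I would assemble the equivalence. By the previous step, $L(e, a) = L(e, a')$ holds if and only if $T_{(a)} = T_{(a')}$ and $\text{coeff}_a(\underline{T_i}\cdot\underline{T_j}) = \text{coeff}_{a'}(\underline{T_i}\cdot\underline{T_j})$ for all $i, j$ (reindexing by the transpose is harmless, since all ordered pairs occur on both sides). The first condition says $a, a'$ lie in the same part of $\Basic(S(\mathcal{C}))$, and the second says they lie in the same part of $\bigwedge_{1 \le i,j \le r} \mathcal{C}_{\underline{T_i}\cdot\underline{T_j}}$, because $\mathcal{C}_v$ groups elements precisely by their coefficient in $v$. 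Together these are exactly the condition that $a, a'$ belong to the same part of $\Basic(S(\mathcal{C})) \wedge \bigwedge_{1 \le i,j \le r} \mathcal{C}_{\underline{T_i}\cdot\underline{T_j}}$, which is the partition defining $R(S(\mathcal{C}))$. This matches the basic sets of $S(\WL_2(\mathcal{C}))$, giving $S(\WL_2(\mathcal{C})) = R(S(\mathcal{C}))$.

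I expect the main obstacle to be entirely bookkeeping rather than conceptual: getting the group-multiplication conventions right so that the counting multiset of colored triangles lines up with the structure constants of the products $\underline{T_i}\cdot\underline{T_j}$, in particular tracking which factor ends up on the left and where the inverse $g^{-1}$ enters. Once this identification is pinned down, the result reduces to the tautology that agreement on all product coefficients is exactly the defining condition of the meet in the refinement.
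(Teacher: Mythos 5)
Your proposal is correct and follows essentially the same route as the paper's proof: reduce labels to pairs of the form $(e,a)$ via the Cayley (translation-invariance) property, identify the multiplicity of each color pair in the multiset of $L(e,a)$ with the coefficient of $\underline{a}$ in a product $\underline{T_j}\cdot\underline{T_i}$ of basic quantities, and conclude that the two partitions of $G$ coincide. The only cosmetic difference is that the paper proves one inclusion of basic sets and then invokes that both families are partitions, while you argue the defining equivalence relations agree directly; your handling of the index transposition matches the paper's $(P_j,P_i)$ bookkeeping.
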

\begin{proof}
Write $\mathcal{C} = \{P_1, \dots, P_r\}$. Then the $T_i := \{g \in G \mid (e,g) \in P_i\}$ form the basic sets of $S(\mathcal{C})$. For $g \in G$ and $1 \leq i, j \leq r$ we observe the following:
\begin{align*}
\text{coefficient of } \underline{g} \text{ in } \underline{T_i}\cdot \underline{T_j} 
&= | \{g_1, g_2 \in G \mid g_1 \in T_i, g_2 \in T_j \wedge g_1 \cdot g_2 = g\} |  \\
&= | \{g_1, g_2 \in G \mid (e,g_1) \in P_i, (e,g_2) \in P_j \wedge g_1 \cdot g_2 = g\} |  \\
&= | \{g_1, g_2 \in G \mid (g_2,g) \in P_i, (e, g_2) \in P_j \wedge g_1 \cdot g_2 = g\} |  \\
&= | \{g_2 \in G \mid (g_2, g) \in P_i, (e, g_2) \in P_j\} |  \\
&= \text{multiplicity of } (P_j, P_i) \text{ in the multiset of } L((e, g))
\end{align*}
where we used the Cayley property for the third equality and for the fourth equality we used that $g_2$ is uniquely determined by $g_1$.

Let $T \in \Basic(S(\WL_2(\mathcal{C})))$. This is of the form $T = \{g \in G \mid (e, g) \in P\}$ for an element $P\in \WL_2(\mathcal{C})$, i.e.\ $P = \{(g_1', g_2') \in G \times G  \mid L(g_1', g_2') = L(g_1, g_2)\}$ for some $(g_1, g_2) \in G\times G$.  Let $\lambda_{i,j}$ be the multiplicity of $(P_i, P_j)$ in $L(g_1, g_2)$ for all $1\leq i,j \leq r$. Then we have:
\begin{align*}
    \hspace{-1pt}T 
    &= \{g \in G \mid L(e, g) = L(g_1, g_2)\} \\
    &= \{g \in G \mid \mathcal{C}((e,g))=\mathcal{C}((g_1,g_2)) \text{ and the multiplicity of } ( P_i, P_j)  \text{ in } L(e, g) \\ &\hspace{2.03cm} \text{ is } \lambda_{i, j} \text{ for all } i,j\} \\
    &= \{g \in G \mid \mathcal{C}((e,g))=\mathcal{C}((g_1,g_2)) \text{ and coefficient of } \underline{g} \text{ in } \underline{T_j}\cdot\underline{T_i} \text{ is } \lambda_{i, j} \text{ for all } i,j\} \\
    &= T_{(g_1,g_2)} \cap \bigcap_{1\leq i,j \leq r} \{g \in G \mid \text{ the coefficient of } g \text{ in } \underline{T_j}\cdot \underline{T_i} \text{ is } \lambda_{i,j}\}
\end{align*}
where $T_{(g_1,g_2)}$ is the basic set of $S(\mathcal{C})$ that contains $(e,g_2g_1^{-1})$ (i.e.\ which corresponds to the $P = \mathcal{C}((g_1,g_2)) \in \mathcal{C}$ with $(g_1,g_2) \in P$). But this is precisely a basic set of $R(S(\mathcal{C}))$.

This shows $\Basic(S(\WL_2(\mathcal{C}))) \subseteq \Basic(P(S(\mathcal{C})))$. But both sets are partitions of $G$, and thus they have to be equal, i.e.\ $\Basic(S(\WL_2(\mathcal{C}))) = \Basic(P(S(\mathcal{C})))$ as desired.
\end{proof}
Now if we form the Cayley graph $\Cay(G, S) = (G, E)$ for some subset $S \subseteq G$ (with $E$ the edges of this graph), then the 2-WL algorithm associates to it the initial coloring of tuples based on the edge properties, which then corresponds to the partition: 
\begin{align*}
    \mathcal{C} := \{&\{(g,g) \mid g \in G\}, \\ &\{(g_1,g_2) \in G\times G \mid g_1\neq g_2 \wedge (g_1, g_2) \not\in E \wedge (g_2, g_1) \not\in E\}, \\
    &\{(g_1,g_2) \in G\times G \mid g_1\neq g_2 \wedge (g_1, g_2) \in E \wedge (g_2, g_1) \not\in E\}, \\
    &\{(g_1,g_2) \in G\times G \mid g_1\neq g_2 \wedge (g_1, g_2) \not\in E \wedge (g_2, g_1) \in E\}, \\
    &\{(g_1,g_2) \in G\times G \mid g_1\neq g_2 \wedge (g_1, g_2) \in E \wedge (g_2, g_1) \in E\}\}
\end{align*}
As this is a Cayley graph we have $(g_1, g_2) \in E \Leftrightarrow (g_1g, g_2g) \in E$, i.e.\ $\mathcal{C}$ is a Cayley partition. \\
But Lemma \ref{wl_refinement} implies inductively that $S(\WL_2^i(\mathcal{C})) = R^i(S(\mathcal{C}))$ holds for all $i \in \mathbb{N}$. The process of turning a Cayley partition into an S-module is injective (we can reconstruct the Cayley partition from the S-module by reintroducing all the equivalent elements) and hence we get:
\begin{proposition}\label{equivalence_wl_refinement} The 2-WL algorithm stabilizes the Cayley partition $\mathcal{C}$ after the same amount of steps as the refinement of S-modules does on the induced S-module $S(\mathcal{C})$.
\end{proposition}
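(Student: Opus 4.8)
\noindent The plan is to exploit the two facts that the surrounding text has already signalled: the iterate identity $S(\WL_2^i(\mathcal{C})) = R^i(S(\mathcal{C}))$ for every $i \in \mathbb{N}$, and the injectivity of the assignment $\mathcal{C} \mapsto S(\mathcal{C})$ on Cayley partitions. Together these transport the notion of ``stabilization'' faithfully between the partitions of $G \times G$ on which $2$-WL acts and the S-modules on which $R$ acts, so that the two processes must halt at the same index.

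First I would establish the iterate identity by induction on $i$. The base case $i = 0$ is the definitional equality $S(\mathcal{C}) = S(\mathcal{C})$. For the inductive step I use that $\WL_2$ maps Cayley partitions to Cayley partitions (shown above), so $\WL_2^i(\mathcal{C})$ is again a Cayley partition and Lemma \ref{wl_refinement} applies to it, giving $S(\WL_2(\WL_2^i(\mathcal{C}))) = R(S(\WL_2^i(\mathcal{C})))$. Substituting the induction hypothesis $S(\WL_2^i(\mathcal{C})) = R^i(S(\mathcal{C}))$ then yields $S(\WL_2^{i+1}(\mathcal{C})) = R^{i+1}(S(\mathcal{C}))$.

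Next I would make the reconstruction, and hence the injectivity, precise. Given a Cayley partition $\mathcal{C} = \{P_1, \dots, P_n\}$, the Cayley property gives $(g_1, g_2) \in P_i \iff (e, g_2 g_1^{-1}) \in P_i$, so each class is determined by its trace $T_i = \{a \in G \mid (e,a) \in P_i\}$ through $P_i = \{(g, ag) \mid a \in T_i,\ g \in G\}$. Thus $\mathcal{C}$ is recovered from the basic sets of $S(\mathcal{C})$, and $S$ is injective on Cayley partitions.

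Finally I would combine the two. Both $(\WL_2^i(\mathcal{C}))_{i \geq 0}$ and $(R^i(S(\mathcal{C})))_{i \geq 0}$ are eventually stationary, as the underlying partitions only get finer and $G$ is finite; here ``stabilization after $k$ steps'' means the least $k$ with $\WL_2^{k+1}(\mathcal{C}) = \WL_2^k(\mathcal{C})$, respectively the least $k$ with $R^{k+1}(S(\mathcal{C})) = R^k(S(\mathcal{C}))$. By the iterate identity the latter equality is exactly $S(\WL_2^{k+1}(\mathcal{C})) = S(\WL_2^k(\mathcal{C}))$, which by injectivity is equivalent to $\WL_2^{k+1}(\mathcal{C}) = \WL_2^k(\mathcal{C})$; hence the two stopping indices agree. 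The one point that genuinely demands care---and which I regard as the main obstacle---is the injectivity step, since it is precisely what rules out any spurious splitting or merging when passing between $G \times G$ and $G$; everything else is bookkeeping layered on top of Lemma \ref{wl_refinement}.
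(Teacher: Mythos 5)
Your proposal is correct and follows exactly the paper's own route: the paper likewise derives the iterate identity $S(\WL_2^i(\mathcal{C})) = R^i(S(\mathcal{C}))$ by induction from Lemma \ref{wl_refinement} (using that $\WL_2$ preserves Cayley partitions) and then invokes injectivity of $\mathcal{C} \mapsto S(\mathcal{C})$ via reconstruction of the classes from their traces at $e$. Your write-up merely spells out the reconstruction $P_i = \{(g, ag) \mid a \in T_i,\ g \in G\}$ and the bookkeeping about stopping indices, which the paper leaves implicit.
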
 
This allows us to analyse the refinement procedure to prove bounds on the amount of steps of the 2-WL algorithm on Cayley graphs. 

\subsection{Steps of 2-WL} \label{3.2}
In this section we start the analysis of the refinement procedure. From now on we restrict G to be an abelian group (but we will continue writing the group multiplicatively), which implies that $\mathbb{Q}[G]$ is a commutative ring. We will use the following concept of exponentiation-stable S-modules to prove bounds on the amount of refinement steps. 
\begin{definition}
We call an S-module $V$ \high{exponentiation-stable}, if for every $T \subseteq G$ with $\underline{T} \in V$ and all positive integers $m$ which are coprime to $|G|$ we also have that $\underline{T}^{(m)} \in V$.
\end{definition}
The Schur theorem on multipliers states that S-rings are exponentiation-stable (more precisely, see \cite[Theorem 2.4.10]{CP2019}) and similarly we prove that after $\mathcal{O}(\log(|G|))$ refinement steps we can assume our  S-module to be exponentiation-stable:
\begin{lemma} \label{cylic_log_n}
Let $V$ be an S-module, $T \subseteq G$ with $\underline{T} \in V$ and $m$ a positive integer that is coprime to $|G|$. Then $\underline{T}^{(m)} \in R^{r}(V)$ with $r = 2 \lceil \log{m} \rceil$.
\end{lemma}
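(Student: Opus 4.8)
The plan is to reduce the general statement to the case where the exponent is a single prime, and then to handle that case with a Fermat-style counting argument, organizing the power computations by repeated squaring so as to control the number of refinement rounds. Throughout I will use that refinement only splits basic sets, so $V \subseteq R(V)$ and hence, by Lemma \ref{subset_refinement}, the chain $R^0(V) \subseteq R^1(V) \subseteq \cdots$ is increasing; once an element lands in some $R^k(V)$ it stays in all later modules.

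First, the prime case. Fix a prime $p$ coprime to $|G|$ and a set $U \subseteq G$ with $\underline{U}$ in some $R^k(V)$. I would form the group-ring power $\underline{U}^p$ and read off its coefficients. Writing $\underline{U}^p = \sum_{h \in G} N_h\,\underline{h}$, the coefficient $N_h$ counts the tuples $(g_1, \dots, g_p) \in U^p$ with $g_1 \cdots g_p = h$. The cyclic shift $(g_1, \dots, g_p) \mapsto (g_p, g_1, \dots, g_{p-1})$ preserves both the product (as $G$ is abelian) and membership in $U^p$; since $p$ is prime every orbit has size $1$ or $p$, and the size-$1$ orbits are exactly the constant tuples $(g, \dots, g)$ with $g \in U$ and $g^p = h$. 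Hence $N_h \equiv \#\{g \in U : g^p = h\} \pmod p$, and because $x \mapsto x^p$ is a bijection of $G$ (here $\gcd(p, |G|) = 1$ is essential) this count is $1$ if $h \in U^{(p)}$ and $0$ otherwise. Thus $N_h \equiv 1 \pmod p$ exactly when $h \in U^{(p)}$. Applying Lemma \ref{S_module_extract} to the single element $\underline{U}^p$ with $\phi$ the indicator of the residue class $1$ modulo $p$ on the nonnegative integers then extracts $\underline{U}^{(p)}$ inside the same module that contains $\underline{U}^p$, with no extra round spent.

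Next, the round count. By Lemma \ref{refinement_product} a product of two elements of an S-module lies in its refinement, so a straightforward induction (splitting $j = \lceil j/2 \rceil + \lfloor j/2 \rfloor$) shows $\underline{U}^j \in R^{k + \lceil \log_2 j \rceil}(V)$ whenever $\underline{U} \in R^k(V)$. In particular $\underline{U}^p$, and hence $\underline{U}^{(p)}$, lies in $R^{k + \lceil \log_2 p \rceil}(V)$. Factoring $m = \prod_i p_i^{a_i}$ and using that $(\,\cdot\,)^{(p_i)}$ composes multiplicatively in the exponent, I would apply the prime step once for each prime factor of $m$ counted with multiplicity, starting from $\underline{T} \in V$; this yields $\underline{T}^{(m)} \in R^s(V)$ with $s = \sum_i a_i \lceil \log_2 p_i \rceil$. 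Finally $s \le \log_2 m + \Omega(m)$ where $\Omega(m) = \sum_i a_i$, and since every prime is at least $2$ we have $\Omega(m) \le \log_2 m$; therefore $s \le 2\log_2 m \le 2\lceil \log m \rceil = r$, and monotonicity of the chain gives $\underline{T}^{(m)} \in R^r(V)$.

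The main obstacle is the prime step: identifying the right invariant to extract. The cyclic-shift argument is a group-ring incarnation of Fermat's little theorem, and its success hinges on $p$ being coprime to $|G|$ so that exponentiation is bijective and the residue $1 \pmod p$ pins down $U^{(p)}$ uniquely. The remaining delicacy is purely arithmetic bookkeeping: keeping the ceilings under control when summing $\lceil \log_2 p_i \rceil$ over the prime factorization, so that the total stays within the promised factor of two.
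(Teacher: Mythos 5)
Your proof is correct, and its skeleton coincides with the paper's: reduce to a prime exponent, obtain $\underline{T}^p$ in $\lceil \log_2 p\rceil$ rounds by repeated squaring via Lemma \ref{refinement_product}, extract $\underline{T}^{(p)}$ inside the same module via Lemma \ref{S_module_extract} applied to the residues of coefficients modulo $p$, and compose over the prime factorization with the bound $\sum_i a_i\lceil \log_2 p_i\rceil \le \log_2 m + \Omega(m) \le 2\lceil\log m\rceil$. Where you genuinely diverge is in proving the central congruence, namely that the coefficient of $\underline{h}$ in $\underline{T}^p$ is $\equiv 1 \pmod p$ for $h \in T^{(p)}$ and $\equiv 0 \pmod p$ otherwise. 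The paper proves this by induction on $|T|$, expanding $(\underline{g} + \underline{T\setminus\{g\}})^p$ with the binomial theorem in the commutative ring $\mathbb{Q}[G]$ and using that $\binom{p}{k}$ is divisible by $p$ for $1 \le k \le p-1$; you instead count the tuples $(g_1,\dots,g_p) \in T^p$ with product $h$ and let the cyclic shift act on them --- the necklace proof of Fermat's little theorem. Your version avoids the induction entirely and pins down the exact residue of each coefficient in one stroke, which is arguably cleaner; on the other hand it leans on commutativity of $G$ just as essentially as the binomial argument does (in a non-abelian group the cyclic shift only preserves the product up to conjugation), so neither route is more general, and since both spend refinement rounds only on the squaring, the round count is identical. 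One cosmetic remark: the monotonicity $R^s(V) \subseteq R^r(V)$ for $s \le r$ that you invoke at the end follows directly from $W \subseteq R(W)$ (each refinement only splits basic sets) applied to $W = R^s(V)$; Lemma \ref{subset_refinement} is not actually needed for that step, though citing it does no harm.
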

\begin{proof}
We start with the case $m = p$ prime:

Similar to the technique of \textit{exponentiation by squaring} we inductively get $\underline{T}^j \in R^i(V)$ for $1 \leq j \leq 2^i$ and all $i \in \mathbb{N}$. The base case $i = 0$ is trivial. In the general case for $i \geq 1$ and $1\leq j \leq 2^i$  we write $j = j_1 + j_2$ with $0 \leq j_1, j_2 \leq 2^{i-1}$ and get $\underline{T}^j = \underline{T}^{j_1} \cdot \underline{T}^{j_2} \in R(R^{i-1}(V)) = R^i(V)$ by induction and Lemma \ref{refinement_product}. In particular we have $\underline{T}^p \in R^r(V)$ with $r = \lceil \log(p) \rceil$.

Now we use the binomial theorem (for which we need that $\mathbb{Q}[G]$ is a commutative ring) to express $\underline{T}^p$: Inductively on the size of any subset $P \subseteq G$ we show that there exist $\lambda_{P,g} \in \mathbb{Z}$ for $g \in G$ such that $$\underline{P}^p = \sum_{g \in P} \underline{g^p} + p \cdot \sum_{g \in G} \lambda_{P, g} \underline{g}$$
The base case $|P| = 0$ is trivial. Let $|P| \geq 1$ and $g \in P$. We compute:
\begin{align*}
    \underline{P}^p &= (\underline{g} + \underline{P\setminus \{g\}})^p = \sum_{k = 0}^{p} \binom{p}{k}\underline{g}^k\underline{P\setminus \{g\}}^{p-k} \\ &= \underline{g}^p + \underline{P\setminus \{g\}}^p  + \sum_{k = 1}^{p-1} \binom{p}{k}\underline{g}^k\underline{P\setminus \{g\}}^{p-k} \\ &= \underline{g^p} + \sum_{g' \in P\setminus\{g\}}\underline{g'^p} + \sum_{k = 1}^{p-1} \binom{p}{k}\underline{g}^k\underline{P\setminus \{g\}}^{p-k} + p \cdot \sum_{g' \in G} \lambda_{P \setminus\{g\}, g'} \underline{g'} \\ &= \sum_{g' \in P}\underline{g'^p} + p \cdot \sum_{g' \in G} \lambda_{P, g'} \underline{g'}
\end{align*}
for some $\lambda_{P, g} \in \mathbb{Z}$, as $\binom{p}{k}$ is divisible by $p$ for all $1 \leq k \leq p-1$ and the products $\underline{g}^k\underline{P\setminus \{g\}}^{p-k}$ are again $\mathbb{Z}$-linear combinations of the $\underline{g'}$. \\
In particular we can now write:
$$\underline{T}^p = \sum_{g \in T} \underline{g^p} + p \cdot \sum_{g \in G} \lambda_{T, g} \underline{g} $$
As $p$ is coprime to $|G|$ the map $G \rightarrow G$: $g \mapsto g^p$ is injective and thus every $g^p$ for $g \in T$ is unique, i.e.\ only the $g' \in T^{(p)}$ appear with a coefficient not divisible by $p$ in the above expression.\\
But this implies that $\underline{T}^{(p)}$ is the element we obtain if we apply Lemma \ref{S_module_extract} in $R^r(V)$ to $\underline{T}^p$ with $\phi(x) = \begin{cases}
1 & x \not\in p \mathbb{Z} \\
0 & x \in p \mathbb{Z}
\end{cases}$ and thus $\underline{T}^{(p)} \in R^r(V)$ for $r = \lceil \log(p) \rceil$.

In general we write $m = p_1^{a_1} \cdot \dots \cdot p_\ell^{a_\ell}$ the prime factorization of $m$.
Then $\underline{T}^{(m)} = (((\underline{T}^{(p_1)})^{(p_1)}) \dots )^{(p_\ell)}$. All the $p_i$ are also coprime to $|G|$ and by repeatedly applying the prime case we get $T^{(m)} \in R^r(V)$ for $r = a_1 \lceil \log(p_1) \rceil + \dots + a_\ell \lceil \log(p_\ell) \rceil \leq 2 \lceil \log(m) \rceil$ as desired.
\end{proof}
Finally we need to see that this property is stable under refinement:
\begin{lemma} \label{refinement_cyclic}
If $V$ is a exponentiation-stable S-module, then so is $R(V)$.
\end{lemma}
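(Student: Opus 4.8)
The plan is to reformulate exponentiation-stability as a statement about the power maps $g \mapsto g^m$ (for $m$ coprime to $|G|$) permuting basic sets, and then to show that this property is inherited by the defining partition of $R(V)$. Throughout I fix one such $m$ and exploit that the operation $(\,\cdot\,)^{(m)}$ is well behaved algebraically.

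First I would record the relevant properties of the power map. Since $G$ is abelian and $m$ is coprime to $|G|$, the map $g \mapsto g^m$ is a bijective group homomorphism of $G$, so its linear extension $v \mapsto v^{(m)}$ is a ring automorphism of $\mathbb{Q}[G]$; in particular $(v \cdot w)^{(m)} = v^{(m)} \cdot w^{(m)}$ for all $v, w \in \mathbb{Q}[G]$, and $\underline{T}^{(m)} = \underline{T^{(m)}}$ for every $T \subseteq G$. Next I would prove the key reformulation: an S-module $V$ is exponentiation-stable if and only if $g \mapsto g^m$ permutes $\Basic(V)$ for every $m$ coprime to $|G|$. The ``if'' direction is immediate, since any $\underline{U} \in V$ has $U$ equal to a union of basic sets, whose image under the (bijective) power map is again such a union, giving $\underline{U}^{(m)} \in V$. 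For the converse, given a basic set $T_i$ exponentiation-stability yields $\underline{T_i^{(m)}} \in V$, so $T_i^{(m)}$ is a union of basic sets; as $g \mapsto g^m$ is a bijection of $G$, the sets $T_1^{(m)}, \dots, T_r^{(m)}$ form a partition of $G$ into $r$ nonempty parts, each a union of some of the $r$ basic sets, and a counting argument forces each $T_i^{(m)}$ to be a single basic set.

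With $\pi$ denoting the resulting permutation of basic sets, so that $T_i^{(m)} = T_{\pi(i)}$, I would push the power map through the construction of $R(V)$. Product compatibility gives $(\underline{T_i} \cdot \underline{T_j})^{(m)} = \underline{T_{\pi(i)}} \cdot \underline{T_{\pi(j)}}$, so applying $g \mapsto g^m$ to the coefficient partition sends $\mathcal{C}_{\underline{T_i} \cdot \underline{T_j}}$ to $\mathcal{C}_{\underline{T_{\pi(i)}} \cdot \underline{T_{\pi(j)}}}$. Because a bijection of $G$ sends intersections to intersections and therefore commutes with forming meets of partitions, and because $\pi$ merely reindexes the pairs $(i,j)$, the power map fixes the whole partition $\Basic(V) \wedge \bigwedge_{1 \le i,j \le r} \mathcal{C}_{\underline{T_i} \cdot \underline{T_j}}$ that defines $R(V)$. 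Hence $g \mapsto g^m$ permutes $\Basic(R(V))$, and by the ``if'' direction of the reformulation $R(V)$ is exponentiation-stable.

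The step I expect to be the main obstacle is the counting argument in the reformulation, namely showing that each basic set is carried to a single basic set rather than to a genuine union; this must be phrased carefully to avoid circularity with the very notion of exponentiation-stability. A secondary point requiring care is the interaction of the power map with the meet $\bigwedge_{i,j} \mathcal{C}_{\underline{T_i} \cdot \underline{T_j}}$, where one must check that permuting the index pairs $(i,j)$ leaves the meet unchanged; both points are elementary but should be stated cleanly.
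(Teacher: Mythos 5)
Your proof is correct, and it takes a genuinely different route from the paper's. The paper argues membership directly: it writes a basic set $T \in \Basic(R(V))$ in its explicit form as $T_\ell$ intersected with the coefficient-level sets of the products $\underline{T_i}\cdot\underline{T_j}$, applies the power map to that expression (using bijectivity of $g \mapsto g^m$ and $(g_1g_2)^m = g_1^m g_2^m$) to identify $T^{(m)}$ as the analogous intersection built from the products $\underline{T_i}^{(m)}\cdot\underline{T_j}^{(m)}$, and then concludes $\underline{T}^{(m)} \in R(V)$ from Lemma \ref{refinement_product} together with the extraction Lemma \ref{S_module_extract}; note that for this it suffices that each $\underline{T_i}^{(m)}$ lies in $V$ as a union of basic sets, so the paper never needs to know that $T_i^{(m)}$ is itself a basic set. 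You instead first upgrade exponentiation-stability to the stronger statement that each power map permutes $\Basic(V)$ --- your counting argument is sound, since the images $T_1^{(m)},\dots,T_r^{(m)}$ are $r$ pairwise disjoint nonempty unions of the $r$ basic sets covering $G$, forcing each to be a single basic set --- and then exploit that $v \mapsto v^{(m)}$ is a ring automorphism of $\mathbb{Q}[G]$, so it carries $\mathcal{C}_{\underline{T_i}\cdot\underline{T_j}}$ to $\mathcal{C}_{\underline{T_{\pi(i)}}\cdot\underline{T_{\pi(j)}}}$, commutes with meets of partitions, and hence fixes the defining partition of $R(V)$ because $\pi$ merely reindexes the pairs $(i,j)$. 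Your route buys a cleaner structural fact (the refinement operator is equivariant under automorphisms of $G$, so any automorphism stabilizing $\Basic(V)$ setwise stabilizes $\Basic(R(V))$) and a stronger conclusion --- basic sets are permuted, not merely preserved as unions --- which is essentially the property the paper later re-derives ad hoc in the proof of Theorem \ref{wl_steps_cyclic_bound} when it argues $\delta_W(g^n) = \delta_W(g)$. The paper's route buys brevity: it reuses the lemmas already in place and skips your reformulation step entirely.
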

\begin{proof}
Write $\Basic(V) = \{T_1, \dots, T_r\}$ and let $T \in \Basic(R(V))$. This is of the form $$T = T_\ell\, \cap \, \bigcap_{1\leq i, j \leq r} \{g \in G \mid \text{the coefficient of } \underline{g} \text{ in } \underline{T_i}\cdot \underline{T_j} \text{ is } \lambda_{i,j}\}$$ for some $\lambda_{i,j} \in \mathbb{Q}$. Then we have for $m \in \mathbb{N}$ with $m$ coprime to $|G|$:
\begin{align*}
    T^{(m)} &=  T_\ell^{(m)}\, \cap \, \bigcap_{1\leq i, j \leq r} \{g^m \in G \mid \text{the coefficient of } \underline{g} \text{ in } \underline{T_i}\cdot \underline{T_j} \text{ is }  \lambda_{i,j}\} \\
    &=  T_\ell^{(m)}\, \cap \, \bigcap_{1\leq i, j \leq r} \{g^m \in G \mid \text{the coefficient of } \underline{g^n} \text{ in } (\underline{T_i}\cdot \underline{T_j})^{(m)} \text{ is }  \lambda_{i,j}\} \\
    &= T_\ell^{(m)}\, \cap \, \bigcap_{1\leq i, j \leq r} \{g \in G \mid \text{the coefficient of } \underline{g} \text{ in } (\underline{T_i}\cdot \underline{T_j})^{(m)} \text{ is }  \lambda_{i,j}\} \\
    &= T_\ell^{(m)}\, \cap \, \bigcap_{1\leq i, j \leq r} \{g \in G \mid \text{the coefficient of } \underline{g} \text{ in } \underline{T_i}^{(m)}\cdot \underline{T_j}^{(m)} \text{ is }  \lambda_{i,j}\} \\
\end{align*}
where we used that $g_1^mg_2^m = (g_1g_2)^m$ for the fourth equality ($G$ is abelian) and repeatedly that $g \mapsto g^m$ is bijective. As $V$ is exponentiation stable we also have $\underline{T_i}^{(n)} \in V$ for all $1 \leq i \leq r$. Lemma \ref{refinement_product} implies that also $\underline{T_i}^{(n)} \cdot \underline{T_j}^{(n)} \in R(V)$ and we know $\underline{T_\ell}^{(n)} \in V \subseteq R(V)$. Thus Lemma \ref{S_module_extract} allows us to extract $\underline{T}^{(n)}$ from the $\underline{T_\ell}^{(n)}$ and $\underline{T_1}^{(n)} \cdot \underline{T_1}^{(n)}, \underline{T_1}^{(n)} \cdot \underline{T_2}^{(n)}, \dots ,\underline{T_r}^{(n)} \cdot \underline{T_r}^{(n)}$ with $$\phi(x,x_{1,1}, x_{1,2}, \dots , x_{r,r}) = \begin{cases}
1 & x = 1 \wedge \bigwedge_{1\leq i, j \leq r} x_{i,j} = \lambda_{i,j} \\
0 & \text{otherwise} 
\end{cases}$$
which proves $\underline{T}^{(n)} \in R(V)$. As this holds for every basic set of $R(V)$ it clearly also holds for every union of basic sets.
\end{proof}
As a result we obtain our main theorem of this section:
\begin{definition} \label{equiv_power}
Consider the equivalence relation on G given by $g_1 \sim g_2$ if and only if there is some $n \in \mathbb{N}$ with $n$ coprime to $|G|$ such that $g_1^n = g_2$. We define $d(G)$ to be the amount of equivalence classes of this equivalence relation.
\end{definition}
\begin{theorem} \label{wl_steps_cyclic_bound}
The 2-WL algorithm on a Cayley graph of a finite abelian group $G$ takes at most $(2+d(G)) \lceil \log(|G|) \rceil = \mathcal{O}(\log(|G|)d(G))$ steps to stabilize.
\end{theorem}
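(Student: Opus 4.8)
The plan is to use Proposition~\ref{equivalence_wl_refinement} to replace the count of 2-WL rounds by the number of refinement steps of the induced S-module, and then bound the latter. Write $V_0 := S(\mathcal{C})$ for the initial S-module of the given Cayley graph and $V_i := R^i(V_0)$. Since refinement only splits basic sets we have $V_i \subseteq V_{i+1}$, so the $V_i$ form an increasing chain of subspaces of $\mathbb{Q}[G]$ that stabilises at a fixed point $V_\infty$; the number of 2-WL rounds equals the least $i$ with $V_i = V_\infty$, i.e.\ the number of strictly increasing steps. I would split the estimate into two phases matching the summands $2$ and $d(G)$ in the claimed bound.

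First phase (the $2\lceil\log|G|\rceil$). Consider the exponentiation group $\Gamma \le \mathrm{Sym}(G)$ generated by the maps $g \mapsto g^m$ with $\gcd(m,|G|)=1$; its orbits on $G$ are exactly the equivalence classes of Definition~\ref{equiv_power}, so there are $d(G)$ of them, and $\Gamma$ is abelian with $|\Gamma| \le |G|$. Let $E(V_0)$ be the exponentiation-stable closure of $V_0$, the span of all $\underline{T}^{(m)}$ for $T \in \Basic(V_0)$ and $m$ coprime to $|G|$; one checks this is the smallest exponentiation-stable S-module containing $V_0$. By Lemma~\ref{cylic_log_n} each generator satisfies $\underline{T}^{(m)} \in R^{2\lceil\log m\rceil}(V_0) \subseteq R^{r_0}(V_0)$ with $r_0 := 2\lceil\log|G|\rceil$, whence $E(V_0) \subseteq R^{r_0}(V_0)$. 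Setting $W_i := R^i(E(V_0))$, Lemma~\ref{subset_refinement} yields the interleaving $V_i \subseteq W_i \subseteq V_{i+r_0}$, and by Lemma~\ref{refinement_cyclic} every $W_i$ is exponentiation-stable. If the $W$-chain stabilises after $N$ steps, the interleaving forces $V_{N+r_0}=V_\infty$, so it suffices to bound the number $N$ of refinement steps needed to stabilise the exponentiation-stable module $E(V_0)$, at the additive cost of $r_0 = 2\lceil\log|G|\rceil$.

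Second phase (the $d(G)\lceil\log|G|\rceil$). Here every $W_i$ is exponentiation-stable, and I would first show that $\Gamma$ permutes the basic sets: for a basic set $T$ the set $T^{(m)}$ is a union of basic sets of the same cardinality, and applying $\gamma$ and $\gamma^{-1}$ shows it must be a single basic set. Group the basic sets into blocks, namely the $\Gamma$-orbits on $\Basic(W_i)$; each block is a $\Gamma$-invariant subset of $G$, hence a union of $\sim$-classes, so there are at most $d(G)$ blocks. As $\Gamma$ is abelian and transitive on the basic sets inside a block $B$, these basic sets share a stabiliser $\Gamma_B$ and their number equals $k_B = [\Gamma : \Gamma_B]$. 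I would then track the potential $\Phi(W) = b(W) + \sum_B \log_2 k_B$, where $b(W)$ is the number of blocks; it satisfies $0 \le \Phi(W) \le d(G) + d(G)\log_2|G|$, and a case analysis of one refinement step (using that each old basic set of a block splits into the same number $c \ge 2$ of new ones and that the sub-block indices $[\Gamma_B : \Gamma_{B'}]$ multiply through the tower law) should show $\Phi$ grows by at least $1$ at every strict step. Together with the first phase this gives the bound.

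The main obstacle is the second phase, and inside it the verification that $\Phi$ strictly increases. The delicate case is a block that is a single basic set ($k_B = 1$) splitting into several $\Gamma$-fixed basic sets: then $\sum_B \log_2 k_B$ does not grow, and one must rely on the block count $b(W)$ increasing instead — which is precisely why the potential has to combine both quantities. Matching the constant exactly to $(2+d(G))\lceil\log|G|\rceil$, rather than a slightly weaker $d(G)(\log_2|G|+1)$, will require care with the rounding in $\log$ and possibly a sharper potential; by contrast the first-phase interleaving is routine once the exponentiation-stable closure and Lemmas~\ref{cylic_log_n}, \ref{subset_refinement}, \ref{refinement_cyclic} are in hand.
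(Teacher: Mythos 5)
Your proposal is sound in substance, and in its main part (the second phase) it takes a genuinely different route from the paper. The first phase is essentially the paper's own argument: the paper also passes, at cost $r_0 = 2\lceil\log|G|\rceil$, to an exponentiation-stable S-module containing $V_0$ and interleaves the two refinement chains via Lemma~\ref{subset_refinement} and Lemma~\ref{refinement_cyclic}. For the second phase, however, the paper argues much more simply: for an exponentiation-stable $W$ it tracks $\delta_W(g) := |T_{(g)}|$, the size of the basic set containing $g$; exponentiation-stability forces $\delta_W$ to be constant on the $d(G)$ classes of Definition~\ref{equiv_power}, and any strict refinement step halves $\delta$ at some fixed representative, so at most $d(G)\lceil\log|G|\rceil$ strict steps occur. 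Your alternative — letting the multiplier group $\Gamma$ act on $\Basic(W)$, grouping basic sets into at most $d(G)$ orbit blocks with common stabilizers, and tracking $\Phi(W)=b(W)+\sum_B \log_2 k_B$ — does go through: if the old basic sets of a block each split into $c\ge 2$ pieces and the new basic sets fall into $s$ new blocks with indices $k_B m_j$ (the containment of stabilizers $\Gamma_{B'}\le\Gamma_B$ holds because a multiplier fixing a new basic set must fix the old one containing it), then $\sum_j m_j = c$, so either $s\ge 2$ or $s=1$ and $\log_2 m_1=\log_2 c\ge 1$; unsplit blocks contribute $0$. So the delicate case you single out is resolved exactly by combining the two terms, as you anticipated. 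What each approach buys: yours exposes more structure (the orbit arithmetic of the multiplier action on basic sets), while the paper's size-halving potential is more elementary and gives the stated constant $(2+d(G))\lceil\log|G|\rceil$ exactly, whereas your potential tops out near $2\lceil\log|G|\rceil + d(G)(\log_2|G|+1)$ — still $\mathcal{O}(d(G)\log|G|)$, so the asymptotic claim is proved either way, as you correctly note.

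One genuine local error needs fixing in your phase 1: the exponentiation-stable closure $E(V_0)$ cannot be defined as the \emph{linear span} of the elements $\underline{T}^{(m)}$. The span of indicator vectors of several crossing partitions is in general not an S-module and is strictly smaller than the S-module of their meet (already for two partitions of a $4$-element set into two pairs the span has dimension $3$ and contains no indicator of any $3$-cell partition). The correct object — which your subsequent reasoning actually uses — is the S-module associated with the meet partition $\bigwedge_{m}\{T^{(m)} \mid T\in\Basic(V_0)\}$, exactly as in the paper; it contains $V_0$, is exponentiation-stable, and lies in $R^{r_0}(V_0)$ by Corollary~\ref{s_module_intersections}, after which your interleaving and the rest of the argument are unaffected.
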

\begin{proof}
By Proposition \ref{equivalence_wl_refinement} it is enough to prove that any S-module $V$ stabilizes after at most $2\lceil \log(|G|) \rceil + d(G) \lceil \log(|G|) \rceil$ refinement steps. \\
Let $\Basic(V) = \{T_1, \dots, T_r\}$. By Lemma \ref{cylic_log_n} we know $\underline{T_i^{(m)}}= \underline{T_i}^{(m)} \in R^r(V)$ for all $1 \leq m \leq |G|$ which are coprime to $|G|$ if we take $r = 2\lceil\log(G) \rceil$.\\
Consider the S-module $V'$ associated to the partition $\bigwedge_{1 \leq m \leq |G| \wedge \gcd(m, |G|)=1} \{T^{(m)} \mid T \in \Basic(V)\}.$\\
Then $V'$ is exponentiation-stable: Every basic set $T$ of $V'$ is an intersection of some $T_{i_m}^{(m)}$, and hence $T^{(m')}$ is the intersection of $(T_{i_m}^{(m)})^{(m')} = T_{i_m}^{(m\cdot m')} = T_{i_m}^{(m \cdot m' \text{ mod } |G|)}$ which shows that also $T^{(m')}$ is a basic set of $V'$.\\
We also have $V \subseteq V'$ and $V' \subseteq R^r(V)$ by Corollary \ref{s_module_intersections} and hence if we have $R^\ell(V') = R^{\ell+1}(V')$ for some $\ell \in \mathbb{N}$, we also have $R^{r + \ell + 1}(V) \subseteq R^{r+ \ell+1}(V') = R^{\ell}(V') \subseteq R^{\ell}(R^r(V)) = R^{\ell + r}(V)$ (where we repeatedly used Lemma \ref{subset_refinement}) and thus $R^{r+\ell+1}(V) = R^{\ell + r}(V)$. \\
Therefore it remains to prove that the refinement on $V'$ stabilizes in at most $d(G) \lceil \log(|G|) \rceil$ steps.

For any S-module $W$ we define the map $\delta_W : G \rightarrow \mathbb{N}$: $g \mapsto |T_{(g)}|$ where we associate to every element the size of the basic set that contains it. \\
If $W$ is exponentiation-stable we have for $g \in G$ and $n \in \mathbb{N}$ with $n$ coprime to $|G|$ that $g^n \in T_{(g)}^{(n)}$ and thus $\delta_W(g^n) \leq \delta_W(g)$. If $m \in \mathbb{N}$ is the multiplicative inverse of $n$ modulo $|G|$ we have $(g^n)^m = g$ and hence we get $\delta_W(g^n) = \delta_W(g)$. This means that $\delta_W$ is determined by $d(G)$ many elements, coming from the equivalence classes of the equivalence relation of Definition \ref{equiv_power}. \\
If $R(W)$ is not the same as $W$, then some basic set has to change, i.e.\ there is some basic set $T \in \Basic(W)$ such that this is now a union of at least two non empty basic sets in $R(W)$: $T = T_1' \cup \dots \cup T_h'$ with $h \geq 2$, $T_i' \in \Basic(R(W))$. But then at least one of the $T_i'$ has to have size at most $\frac{|T|}{2}$ and for any $g \in T_i'$ with $|T_i'| \leq |T| / 2$ we have $\delta_{R(W)}(g) = |T_i'| \leq |T| / 2 = \delta_{W}(g) / 2$.\\
Lemma \ref{refinement_cyclic} proves that if $W$ is exponentiation-stable also $R(W)$ is exponentiation-stable. But if we fix a set $\{g_1, \dots, g_{d(G)}\}$ of representatives for the equivalence relation $\sim$, then if the refinement does not stabilize $W$ we have $\delta_{R(W)}(g_i) \leq  \delta_{W}(g_i) / 2$ for some $1 \leq i \leq d(G)$. Finally each $\delta_W(g)$ is at most $|G|$ and can thus be halved at most $\lceil \log(|G|) \rceil$ times.

Applied to $V'$ this proves that we have $R^\ell(V') = R^{\ell+1}(V')$ for some $\ell$ with $\ell \leq d(G)\cdot \lceil \log(|G|) \rceil$ as desired.
\end{proof}
For $G = \mathbb{Z}_n$ we get that $m_1, m_2 \in \mathbb{Z}_n$ are equivalent in the sense of Definition \ref{equivalence_wl_refinement} if and only if $\gcd(m_1, n) = \gcd(m_2, n)$. Hence $d(\mathbb{Z}_n)$ is the amount of divisors of $n$ and Theorem \ref{dnlogn_bound} is proved. For prime numbers $n$ this implies a bound of $\mathcal{O}(\log(n))$ steps and for prime powers $n=p^\ell$ a bound of $\mathcal{O}(\log(n)^2)$ steps. 
\section{Color Refinement on circulant graphs} \label{4}
\subsection{General constructions}
In this section we interpret the color refinement algorithm on vertex-colored Cayley graphs of finite groups in terms of operations on S-modules. For this $G$ is still an arbitrary finite group but we pick $K = \mathbb{C}$ (for this section it is again important to choose $K$ of characteristic $0$, but for what we are going to do we also need more algebraic elements, e.g.\ the $p$-th roots of unity). 
The central notion in the context of the color refinement algorithm is that of a $\Con$-refinement for a given connection set $\Con \subseteq G$, which is going to correspond to a single refinement step of the Color Refinement algorithm on the Cayley graph $\Cay(G, \Con)$:
\begin{definition}For any element $v = \sum_{g \in G}\lambda_g \underline{g} \in \mathbb{C}[G]$ we define the induced partition on $G$ given by the elements of $G$ with identical coefficients $\mathcal{C}_v := \{\{g \in G \mid \lambda_g = \lambda\} \mid \lambda \in \mathbb{C}\}$.\\
For $\Con \subseteq G$ and $V$ an S-module with basic sets $T_1, \dots, T_r$ we define the \high{$\Con$-refinement} of $V$ to be the S-module $R_\Con(V)$ corresponding to the partition $\Basic(V) \wedge \bigwedge_{1 \leq i \leq r} \mathcal{C}_{\underline{\Con} \cdot \underline{T_i}}$.
\end{definition}
\begin{lemma} \label{S_refinement_product}
Let $\Con \subseteq G$, $V$ an S-module and $v\in V$. Then $\underline{\Con} \cdot v$ (the product in $\mathbb{\mathbb{C}}[G]$) is an element of $R_\Con(V)$.
\end{lemma}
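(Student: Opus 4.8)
The plan is to mirror the argument of Lemma \ref{refinement_product} exactly, since the $\Con$-refinement is the natural one-sided analogue of the refinement $R(V)$. Writing $\Basic(V) = \{T_1, \dots, T_r\}$, the first step is to observe that each building block $\underline{\Con} \cdot \underline{T_i}$ already lies in $R_\Con(V)$. This is built into the definition: the partition defining $R_\Con(V)$ is $\Basic(V) \wedge \bigwedge_{1 \leq i \leq r} \mathcal{C}_{\underline{\Con} \cdot \underline{T_i}}$, which in particular refines each $\mathcal{C}_{\underline{\Con} \cdot \underline{T_i}}$. Hence the coefficient function of $\underline{\Con} \cdot \underline{T_i}$ is constant on every basic set of $R_\Con(V)$, so $\underline{\Con} \cdot \underline{T_i}$ is a $\mathbb{C}$-linear combination of the simple quantities of those basic sets and therefore belongs to $R_\Con(V)$. (Alternatively one can phrase this via Corollary \ref{extract_coefficients}, extracting each level set of the coefficient.)

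Next I would expand the given element in the basis. Since $\{\underline{T_i}\}_{1 \leq i \leq r}$ is a basis of $V$ and $v \in V$, we may write $v = \sum_{1 \leq i \leq r} \mu_i \underline{T_i}$ with $\mu_i \in \mathbb{C}$. Multiplying on the left by $\underline{\Con}$ and using distributivity in the group ring $\mathbb{C}[G]$ gives $\underline{\Con} \cdot v = \sum_{1 \leq i \leq r} \mu_i (\underline{\Con} \cdot \underline{T_i})$.

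Finally, because $R_\Con(V)$ is a linear subspace of $\mathbb{C}[G]$ and each summand $\underline{\Con} \cdot \underline{T_i}$ lies in $R_\Con(V)$ by the first step, the linear combination $\underline{\Con} \cdot v$ lies in $R_\Con(V)$ as well, which is the claim. I do not expect any real obstacle here; the only point that requires care is the first step, namely justifying that the generators $\underline{\Con} \cdot \underline{T_i}$ are themselves elements of the refined S-module, and this is immediate from the meet construction since the defining partition is a common subpartition of all the $\mathcal{C}_{\underline{\Con} \cdot \underline{T_i}}$.
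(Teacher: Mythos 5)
Your proposal is correct and follows essentially the same argument as the paper: the generators $\underline{\Con} \cdot \underline{T_i}$ lie in $R_\Con(V)$ by the definition of the meet partition, and then linearity of $R_\Con(V)$ finishes the proof after expanding $v$ in the basis $\{\underline{T_i}\}$. Your added justification of why the generators lie in $R_\Con(V)$ (constancy of coefficients on basic sets) is a correct elaboration of what the paper dismisses with ``by definition.''
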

\begin{proof}
Let $\Basic(V) = \{T_1, \dots, T_r\}$. By definition $\underline{\Con} \cdot \underline{T_j} \in R_\Con(V)$ for every $1 \leq i,j \leq r$. The $\underline{T_i}$ form a basis of $V$ so we can write $v = \sum_{1\leq i \leq r}\mu_i \underline{T_i}$ with $\mu_i \in \mathbb{C}$. Then $\underline{\Con}\cdot v = \sum_{1\leq i \leq r} \mu_i (\underline{\Con}\cdot \underline{T_i}) \in R_\Con(V)$ as $R_\Con(V)$ is a vector space.
\end{proof}
\begin{lemma} \label{S_subset_refinement}
Let $\Con \subseteq G$ and let $V, W$ be S-modules such that $V \subseteq W$. Then also $R_\Con(V) \subseteq R_\Con(W)$.
\end{lemma}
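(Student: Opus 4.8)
The plan is to mirror the proof of Lemma \ref{subset_refinement} for the ordinary refinement, replacing the two-sided products $\underline{T_i}\cdot\underline{T_j}$ by the single products $\underline{\Con}\cdot\underline{T_i}$ against the fixed connection set. The structure of the argument carries over essentially unchanged, so I expect no genuine difficulty.

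First I would write $\Basic(V) = \{T_1, \dots, T_r\}$ and unwind the definition of the $\Con$-refinement. Since $R_\Con(V)$ corresponds to the partition $\Basic(V) \wedge \bigwedge_{1 \leq i \leq r} \mathcal{C}_{\underline{\Con}\cdot\underline{T_i}}$, every basic set $T \in \Basic(R_\Con(V))$ has the form
\[
T = T_\ell \cap \bigcap_{1 \leq i \leq r} \{g \in G \mid \text{the coefficient of } \underline{g} \text{ in } \underline{\Con}\cdot\underline{T_i} \text{ is } \lambda_i\}
\]
for some index $1 \leq \ell \leq r$ and coefficients $\lambda_i \in \mathbb{C}$.

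Next I would produce all of these ingredients inside $R_\Con(W)$. Because $V \subseteq W$ we have $\underline{T_i} \in W$ for every $i$, so Lemma \ref{S_refinement_product} applied in $W$ gives $\underline{\Con}\cdot\underline{T_i} \in R_\Con(W)$; moreover $\underline{T_\ell} \in W \subseteq R_\Con(W)$, the last inclusion holding because the $\Con$-refinement only subdivides the basic sets of $W$. Applying Lemma \ref{S_module_extract} inside $R_\Con(W)$ to the elements $\underline{T_\ell}, \underline{\Con}\cdot\underline{T_1}, \dots, \underline{\Con}\cdot\underline{T_r}$ with the indicator map
\[
\phi(x, x_1, \dots, x_r) = \begin{cases} 1 & x = 1 \wedge \bigwedge_{1 \leq i \leq r} x_i = \lambda_i \\ 0 & \text{otherwise} \end{cases}
\]
then extracts exactly $\underline{T}$, so $\underline{T} \in R_\Con(W)$. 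As $T$ was an arbitrary basic set of $R_\Con(V)$ and these span $R_\Con(V)$, this yields $R_\Con(V) \subseteq R_\Con(W)$.

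The only bookkeeping difference from Lemma \ref{subset_refinement} is that there are now $r$ products $\underline{\Con}\cdot\underline{T_i}$ to track rather than the $r^2$ products $\underline{T_i}\cdot\underline{T_j}$, which simplifies the indicator map $\phi$ accordingly. The single point worth stating carefully is the inclusion $\underline{T_\ell} \in R_\Con(W)$, which I would justify by noting that $R_\Con(W)$ refines the partition $\Basic(W)$ and hence every $\underline{T_\ell} \in W$ is a sum of basis elements of $R_\Con(W)$.
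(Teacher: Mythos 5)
Your proof is correct and follows essentially the same route as the paper's own argument: unwind the definition of a basic set of $R_\Con(V)$, use Lemma \ref{S_refinement_product} in $W$ to place each $\underline{\Con}\cdot\underline{T_i}$ in $R_\Con(W)$, and extract $\underline{T}$ via Lemma \ref{S_module_extract} with the indicator map. Your explicit justification of $\underline{T_\ell} \in W \subseteq R_\Con(W)$ (since the $\Con$-refinement only subdivides basic sets) is a point the paper leaves implicit, and is a welcome addition.
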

\begin{proof}
Let $\Basic(V) = \{T_1, \dots, T_r\}$. By definition an element $T \in \Basic(R_\Con(V))$ is of the form $T = T_\ell \cap \bigcap_{1\leq i \leq r} T_{i}'$ for some $1\leq \ell \leq r$ and where the $T_{i}'$ are the elements of $G$ which appear in $\underline{\Con}\cdot \underline{T_i}$ with coefficient $\lambda_{i} \in \mathbb{C}$. We have $\underline{T_i} \in V \subseteq W$ and thus also $\underline{\Con}\cdot \underline{T_i} \in R_\Con(W)$ by Lemma \ref{S_refinement_product}. Applying Lemma \ref{S_module_extract} in $R_\Con(W)$ to $\underline{T_\ell}, \underline{\Con} \cdot \underline{T_{1}}, \dots  ,\underline{\Con}\cdot \underline{T_{r}}$ with $$\phi(x,x_1, \dots, x_r) = \begin{cases}
1 & x = 1 \wedge \bigwedge_{1\leq i \leq r} x_{i} = \lambda_{i} \\
0 & \text{otherwise} 
\end{cases}$$ implies $\underline{T} \in \Basic(R_\Con(W))$. Hence $R_\Con(V) \subseteq R_\Con(W)$.
\end{proof}
\begin{definition} Let $\Con \subseteq G$ be the connection set of a Cayley graph $\Cay(G, \Con)$.
Let $\mathcal{C}$ be a partition of $G$, which we interpret as a coloring of the vertices. For $g \in G$ we consider the multiset of colors of all in-neighbors (i.e.\ all vertices from which there exists an edge to $g$): 
$$L(g) := \{\{\mathcal{C}(s^{-1}g) \mid s \in \Con\}\}$$ 
Then we define one step of the \high{Color-Refinement algorithm} on the coloring $\mathcal{C}$ of $\Cay(G,\Con)$ to be the partition $\CR_{\Con}(\mathcal{C})$ induced by splitting elements with different labels $L$. The sets of $\CR_{\Con}(\mathcal{C})$ are then given by $\{g' \in G \mid \mathcal{C}(g) = \mathcal{C}(g') \wedge L(g') = L(g)\}$ for $g \in G$.
\end{definition}
\begin{lemma} \label{CR_automorphism_stable}
Let $\Cay(G,\Con)$ be a Cayley graph with $\Con \subseteq G$ and $\mathcal{C}$ a partition of $G$. If $\varphi : G \rightarrow G$ is an automorphism of $\Cay(G, \Con)$ such that $\mathcal{C}(g) = \mathcal{C}(\varphi(g))$ for all $g \in G$, then also $\CR_{\Con}(\mathcal{C})(g) = \CR_{\Con}(\mathcal{C})(\varphi(g))$, i.e.\ the color refinement algorithm respects automorphisms of the colored graph.
\end{lemma}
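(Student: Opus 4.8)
The plan is to reduce the claim to a single multiset identity and then exploit that $\varphi$, being a graph automorphism, permutes the in-neighborhood of each vertex. By definition of the refinement step, $g$ and $\varphi(g)$ land in the same class of $\CR_\Con(\mathcal{C})$ exactly when $\mathcal{C}(g) = \mathcal{C}(\varphi(g))$ and $L(g) = L(\varphi(g))$. The first of these is precisely the color-compatibility hypothesis, so the entire lemma comes down to proving the multiset equality
\[
\{\{\mathcal{C}(s^{-1}g) \mid s \in \Con\}\} = \{\{\mathcal{C}(s^{-1}\varphi(g)) \mid s \in \Con\}\}.
\]

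First I would record the combinatorial meaning of the index set: the map $s \mapsto s^{-1}g$ is injective on $\Con$, so $\{s^{-1}g \mid s \in \Con\}$ is the set of in-neighbors of $g$, a set of $|\Con|$ distinct vertices, and $L(g)$ is just the multiset of their colors. Because $\varphi$ is an automorphism of $\Cay(G,\Con)$, it sends an edge $(h,g)$ to the edge $(\varphi(h),\varphi(g))$ and hence restricts to a bijection from the in-neighborhood of $g$ onto the in-neighborhood of $\varphi(g)$. Together with the hypothesis $\mathcal{C}(h) = \mathcal{C}(\varphi(h))$ for every $h \in G$, this bijection shows that the two multisets of in-neighbor colors agree, which is exactly the displayed identity.

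The one point that needs care -- and really the only genuine obstacle -- is that $\varphi$ is merely a \emph{graph} automorphism, not necessarily a group automorphism, so one may \emph{not} write $\varphi(s^{-1}g) = s^{-1}\varphi(g)$. To keep the bookkeeping honest I would introduce the permutation $\pi$ of $\Con$ defined implicitly by $\varphi(s^{-1}g) = \pi(s)^{-1}\varphi(g)$; this is well defined because every in-neighbor of $\varphi(g)$ is written uniquely as $t^{-1}\varphi(g)$ with $t \in \Con$ (the in-neighbors being distinct), and it is a bijection because $\varphi$ restricts to a bijection between the two in-neighborhoods. Reindexing the right-hand multiset by $\pi$ and invoking $\mathcal{C}(\varphi(s^{-1}g)) = \mathcal{C}(s^{-1}g)$ then yields the multiset identity; combining it with $\mathcal{C}(g) = \mathcal{C}(\varphi(g))$ finishes the proof.
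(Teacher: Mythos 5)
Your proof is correct and follows essentially the same route as the paper: both arguments use that $\varphi$, being a graph automorphism preserving the coloring, maps the in-neighborhood of $g$ bijectively onto the in-neighborhood of $\varphi(g)$, giving $L(g) = L(\varphi(g))$. The reindexing permutation $\pi$ you introduce is exactly the bookkeeping the paper compresses into the set identity $\{\varphi(\con^{-1}g) \mid \con \in \Con\} = \{\con^{-1}\varphi(g) \mid \con \in \Con\}$, so your version is merely a more explicit rendering of the same argument.
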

\begin{proof}
As $\varphi$ is a graph automorphism it respects the neighbors of vertices. In particular $\{\varphi(\con^{-1}g) \mid \con \in \Con\} = \{\con^{-1}\varphi(g) \mid \con \in \Con\}$ for all $g \in G$. Together with $\mathcal{C}(\con^{-1}g) = \mathcal{C}(\varphi(\con^{-1}g))$ this implies $L(g) = L(\varphi(g))$ for all $g\in G$ and hence also $\CR_{\Con}(\mathcal{C})(g) = \CR_{\Con}(\mathcal{C})(\varphi(g))$.
\end{proof}
\begin{definition}
To every partition $\mathcal{C} = \{T_1, \dots, T_n\}$ of $G$ we associate the \high{induced S-module} $S(\mathcal{C})$ whose basic sets are the $T_i$.
\end{definition}
\begin{lemma}
\label{cr_refinement} Let $\Con \subseteq G$
and let $\mathcal{C}$ be a partition of $G$. Then $S(\CR_{\Con}(\mathcal{C})) = R_\Con(S(\mathcal{C}))$ i.e.\ the $\Con$-refinement of the induced S-module is precisely the induced S-module after executing one step of the Color Refinement algorithm.
\end{lemma}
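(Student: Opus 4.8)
The plan is to show that the partition of $G$ underlying $S(\CR_\Con(\mathcal{C}))$ and the partition underlying $R_\Con(S(\mathcal{C}))$ are literally the same partition; since the map $S(\cdot)$ is determined by (and determines) its partition of basic sets, the equality of the two S-modules follows at once. This mirrors the structure of the proof of Lemma \ref{wl_refinement} for the $2$-WL case.

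First I would establish the bridge between the multiset label $L$ and the coefficients of the products $\underline{\Con}\cdot\underline{T_i}$. Writing $\Basic(S(\mathcal{C})) = \{T_1, \dots, T_r\}$, I expand $\underline{\Con}\cdot\underline{T_i} = \sum_{s \in \Con,\, t \in T_i} \underline{st}$ and collect the coefficient of a fixed $\underline{g}$. Because $t = s^{-1}g$ is uniquely determined by $s$, this coefficient equals $|\{s \in \Con \mid s^{-1}g \in T_i\}|$, which is exactly the multiplicity of the color $T_i$ in $L(g) = \{\{\mathcal{C}(s^{-1}g) \mid s \in \Con\}\}$. This is the key identity, analogous to the coefficient computation in the proof of Lemma \ref{wl_refinement}.

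Second, I translate both partitions into coincident conditions on pairs $g, g'$. Two elements lie in the same block of $R_\Con(S(\mathcal{C}))$, whose partition is $\Basic(S(\mathcal{C})) \wedge \bigwedge_{1 \leq i \leq r} \mathcal{C}_{\underline{\Con}\cdot\underline{T_i}}$, precisely when $\mathcal{C}(g) = \mathcal{C}(g')$ (this is the meet with $\Basic(S(\mathcal{C})) = \mathcal{C}$) and, for every $i$, the coefficients of $\underline{g}$ and of $\underline{g'}$ in $\underline{\Con}\cdot\underline{T_i}$ agree (this is the meet of the $\mathcal{C}_{\underline{\Con}\cdot\underline{T_i}}$). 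On the other side, $g, g'$ lie in the same block of $\CR_\Con(\mathcal{C})$ exactly when $\mathcal{C}(g) = \mathcal{C}(g')$ and $L(g) = L(g')$. By the key identity, $L(g) = L(g')$ holds if and only if the multiplicity of each color $T_i$ agrees, i.e.\ if and only if all the coefficients agree; hence the two block-membership conditions are identical and the partitions coincide.

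I do not expect a serious obstacle, as the content reduces to the bookkeeping identity relating the number of in-neighbors of color $T_i$ to the coefficient of $\underline{g}$ in $\underline{\Con}\cdot\underline{T_i}$. The one point requiring a little care is verifying that knowing the full vector $(\text{coefficient of } \underline{g} \text{ in } \underline{\Con}\cdot\underline{T_i})_{1 \leq i \leq r}$ is equivalent to knowing the multiset $L(g)$, and not merely some coarsening of it; this is exactly where the hypothesis that the $T_i$ form a partition of $G$ enters, so that the multiset of in-neighbor colors splits into independent per-color multiplicities. Finally I would note that equality of the partitions yields equality of basic sets, giving $S(\CR_\Con(\mathcal{C})) = R_\Con(S(\mathcal{C}))$ as claimed.
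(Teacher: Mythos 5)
Your proposal is correct and takes essentially the same route as the paper: both rest on the key identity that the coefficient of $\underline{g}$ in $\underline{\Con}\cdot\underline{T_i}$ equals the number of in-neighbors of $g$ lying in $T_i$ (i.e.\ the multiplicity of the color $T_i$ in $L(g)$), and both then conclude that the blocks of $\CR_\Con(\mathcal{C})$ and of $R_\Con(S(\mathcal{C}))$ are cut out by the same conditions. Your write-up merely makes explicit the point the paper leaves implicit, namely that the multiset $L(g)$ carries exactly the same information as the vector of per-color multiplicities.
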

\begin{proof}
Let $\Basic(S(C)) = \mathcal{C} = \{C_1, \dots, C_r\}$.  For the amount of in-neighbors of $g$ in $C_i$ (given by $|U^{(-1)}g\cap C_i|$) we get
$$|\Con^{(-1)}g \cap C_i| = |\{\con \in \Con, c \in C_i \mid \con c = g\}| = \text{coefficient of } g \text{ in } \underline{\Con}\cdot \underline{C_i}$$
A basic set of $S(\CR_U(\mathcal{C}))$ consists of the elements from a $C_j$ with fixed amount of neighbors in $C_i$ for all $1 \leq i \leq r$ (and hence equivalently with fixed coefficients in the $\underline{\Con} \cdot \underline{C_i}$). This immediately corresponds to a basic set of $R_\Con(S(\mathcal{C}))$. 
\end{proof}
 Suppose we have a partition $\mathcal{C}$ (a coloring) of the Cayley graph $\Cay(G, \Con)$ with $\Con \subseteq G$. We write $\CR_{\Con}^*(\mathcal{C})$ for the partition we obtain by applying the Color-Refinement algorithm to $\mathcal{C}$ until it stabilizes. Then Lemma \ref{cr_refinement} (using Lemma \ref{S_refinement_product} and Lemma \ref{S_subset_refinement}) implies that: 
\begin{proposition} \label{cr_by_multiplication} $S(\CR_{\Con}^*(\mathcal{C}))$ is precisely the smallest S-module that contains $S(\mathcal{C})$ and is stable under multiplication by $\underline{\Con}$. 
\end{proposition}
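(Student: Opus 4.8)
The plan is to identify $S(\CR_{\Con}^*(\mathcal{C}))$ with the stable limit $V^*$ of the refinement chain $V_0 \subseteq V_1 \subseteq \cdots$ given by $V_0 := S(\mathcal{C})$ and $V_{i+1} := R_\Con(V_i)$, and then to verify the two halves of the minimality characterization. Applying Lemma \ref{cr_refinement} inductively yields $S(\CR_\Con^i(\mathcal{C})) = R_\Con^i(S(\mathcal{C}))$, so once the chain stabilizes we have $S(\CR_\Con^*(\mathcal{C})) = V^*$. The chain must stabilize since each $R_\Con$ only refines the underlying partition — its defining partition $\Basic(V)\wedge\bigwedge_i \mathcal{C}_{\underline{\Con}\cdot\underline{T_i}}$ is a common refinement of $\Basic(V)$ with further partitions — and refining a partition enlarges the associated S-module, while the dimension of an S-module in $\mathbb{C}[G]$ is bounded by $|G|$. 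In particular $V \subseteq R_\Con(V)$ for every $V$, so the chain is increasing.

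First I would check that $V^*$ has the two required closure properties. It is an S-module by construction, and it contains $S(\mathcal{C}) = V_0$ because the chain is increasing. For stability under multiplication by $\underline{\Con}$, I would use that stabilization means $R_\Con(V^*) = V^*$; then Lemma \ref{S_refinement_product} gives $\underline{\Con}\cdot v \in R_\Con(V^*) = V^*$ for every $v \in V^*$.

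The heart of the argument is minimality: if $W$ is any S-module with $S(\mathcal{C}) \subseteq W$ and $\underline{\Con}\cdot w \in W$ for all $w \in W$, then $V^* \subseteq W$. I would prove by induction that $V_i \subseteq W$ for all $i$, with base case $V_0 = S(\mathcal{C}) \subseteq W$. The inductive step rests on the key lemma $R_\Con(W) \subseteq W$ for such $W$: writing a basic set of $R_\Con(W)$ as $T = T_\ell \cap \bigcap_i T_i'$, where $T_i'$ collects the elements appearing with a fixed coefficient in $\underline{\Con}\cdot \underline{T_i}$, I observe that $\underline{\Con}\cdot \underline{T_i} \in W$ by multiplicative stability, so Corollary \ref{extract_coefficients} yields $\underline{T_i'} \in W$, and then Corollary \ref{s_module_intersections} yields $\underline{T} \in W$; hence every basic set of $R_\Con(W)$ is a simple quantity lying in $W$, giving $R_\Con(W) \subseteq W$. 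Combining this with the inductive hypothesis $V_i \subseteq W$ and the monotonicity of Lemma \ref{S_subset_refinement} gives $V_{i+1} = R_\Con(V_i) \subseteq R_\Con(W) \subseteq W$, completing the induction and therefore $V^* \subseteq W$.

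The main obstacle — really the only nontrivial point — is the key lemma $R_\Con(W) \subseteq W$, i.e.\ that multiplicative stability of an S-module already forces closure under the combinatorial $\Con$-refinement. Everything there hinges on the extraction corollaries being available inside $W$ (which in turn relied on working over a field of characteristic $0$), so the essential content is that being stable under the product with $\underline{\Con}$ is exactly the algebraic shadow of being stable under one Color-Refinement step. The remaining assembly is a routine monotone induction.
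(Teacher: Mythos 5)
Your proposal is correct and follows essentially the same route the paper intends: the paper states this proposition as a direct consequence of Lemma \ref{cr_refinement} (identifying Color-Refinement steps with $\Con$-refinements), Lemma \ref{S_refinement_product} (giving stability of the limit under multiplication by $\underline{\Con}$), and Lemma \ref{S_subset_refinement} (giving minimality by monotone induction). Your write-up merely fills in the details the paper leaves implicit, including the key observation that a multiplicatively stable S-module $W$ satisfies $R_\Con(W) \subseteq W$ via the extraction corollaries, which is exactly the intended argument.
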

\subsection{Tinhofer property of \texorpdfstring{$\mathbb{Z}_p$}{Zp}}
In this section we use the interpretation of color refinement using S-modules to prove that all Cayley graphs over $\mathbb{Z}_p$ have the Tinhofer property. Our main tool for this are going to be the eigenvalues and eigenspaces of the linear map $(\underline{\Con}\cdot) : \mathbb{C}[G] \rightarrow \mathbb{C}[G]$ given by multiplication with $\underline{\Con}$ for $\Con \subseteq G$ the connection set of our graph, i.e.\ the spectrum of the adjacency matrix of the corresponding graph.

From now on we fix $p$ a prime and only consider $G = \mathbb{Z}_p = \{0, \dots, p-1\}$ (using additive notation, so that $g_1g_2$ for $g_1, g_2 \in \mathbb{Z}_p$ denotes the multiplication modulo $p$). If we fix some primitive $p$-th root of unity $\xi_p \in \mathbb{C}$ we get the dual basis of $\mathbb{C}[\mathbb{Z}_p]$ coming from the representation theory of $\mathbb{Z}_p$, i.e.\ from the characters $G \rightarrow \mathbb{C}$, $g \mapsto (\xi_p)^{gk}$:
$$e_k := \frac{1}{p}\sum_{g \in \mathbb{Z}_p} \xi_p^{g k}\underline{g}$$
for $0 \leq k \leq p-1$. For $g \in \mathbb{Z}_p$ we get the expression:
\begin{align*}
    \underline{g} = \frac{1}{p}\sum_{g' \in \mathbb{Z}_p}\sum_{k = 0}^{p-1} \xi_p^{(g-g')k} \underline{g'}= \sum_{k = 0}^{p-1} \xi_p^{gk} e_k 
\end{align*}
where the first equality uses $\sum_{g=0}^{p-1}\xi_p^g = 0$. In particular (by the orthogonality relations of the characters) this is a basis of idempotents:
\begin{align*}
    e_k \cdot e_\ell &= \frac{1}{p^2}\sum_{g' \in \mathbb{Z}_p}\sum_{g \in \mathbb{Z}_p} \xi_p^{g k}\xi_p^{g'\ell}\underline{g+ g'} = \frac{1}{p^2} \sum_{g \in \mathbb{Z}_p} \left(\sum_{\substack{g_1,g_2 \in \mathbb{Z}_p \\ g_1+g_2 = g}} \xi_p^{g_1k+g_2\ell}  \right) \underline{g} \\ &= \frac{1}{p^2} \sum_{g \in \mathbb{Z}_p} \left(\sum_{g_1 \in \mathbb{Z}_p} \xi_p^{g_1(k-\ell) + g\ell} \right) \underline{g} = \begin{cases}
e_k & k = \ell \\
0 & k \neq \ell
\end{cases}
\end{align*}
where we used $\sum_{g=0}^{p-1}\xi_p^g = 0$ for the last equality again. 

For a fixed $\Con \subseteq \mathbb{Z}_p$ we can then compute $\underline{\Con} = \sum_{\con \in \Con} \underline{\con} = \sum_{k =0}^{p-1} \left(\sum_{\con \in \Con} \xi_p^{\con k}\right) e_k$ which implies that the linear map $(\underline{\Con}\cdot) : \mathbb{C}[\mathbb{Z}_p] \rightarrow \mathbb{C}[\mathbb{Z}_p]$ has eigenvectors $e_k$ for $0 \leq k \leq p-1$ with eigenvalues $\lambda_k := \sum_{\con \in \Con} \xi_p^{\con k}$ respectively. To analyse the eigenspaces we determine which eigenvalues are the same:
\begin{lemma} \label{eigenvalues_same}
For $\emptyset \neq \Con \subseteq \mathbb{Z}_p \setminus \{0\}$ the eigenvalues $\lambda_k = \sum_{\con \in \Con} \xi_p^{\con k}$ are equal for $k$ and $k'$ if and only if there is some $\ell \in \mathbb{Z}_p \setminus \{0\}$ with $k = \ell k'$ and $\{ \con \ell \mid \con \in \Con\} = \Con$.
\end{lemma}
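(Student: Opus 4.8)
The plan is to rewrite each eigenvalue as a sum of distinct roots of unity indexed by a dilate of $\Con$, and then reduce the entire equivalence to the set equality $\Con^{(k)} = \Con^{(k')}$. Since $p$ is prime, for any $k \neq 0$ the map $\con \mapsto \con k$ is a bijection of $\mathbb{Z}_p$, so substituting $w = \con k$ gives
$$\lambda_k = \sum_{\con \in \Con} \xi_p^{\con k} = \sum_{w \in \Con^{(k)}} \xi_p^{w},$$
a sum of $|\Con|$ distinct powers of $\xi_p$. The easy (backward) direction is then immediate: if $k = \ell k'$ and $\Con^{(\ell)} = \Con$, then $\Con^{(k)} = (\Con^{(\ell)})^{(k')} = \Con^{(k')}$, and comparing the two sums term by term yields $\lambda_k = \lambda_{k'}$.

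For the converse I would use that the minimal polynomial of $\xi_p$ over $\mathbb{Q}$ is the cyclotomic polynomial $1 + x + \cdots + x^{p-1}$, so that any rational relation $\sum_{w=0}^{p-1} c_w \xi_p^w = 0$ forces all the coefficients $c_w$ to coincide. Assuming first $k, k' \neq 0$, the equation $\lambda_k - \lambda_{k'} = 0$ is exactly such a relation, whose coefficient of $\xi_p^w$ is the difference of the indicators of $\Con^{(k)}$ and $\Con^{(k')}$ at $w$ and hence lies in $\{-1,0,1\}$. By the cyclotomic fact all these coefficients equal a single constant $c$; summing over $w$ gives $pc = |\Con^{(k)}| - |\Con^{(k')}| = 0$, since both dilates have $|\Con|$ elements, whence $c = 0$ and therefore $\Con^{(k)} = \Con^{(k')}$. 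Setting $\ell := k (k')^{-1} \neq 0$ gives $k = \ell k'$, and dilating the equality $\Con^{(k)} = \Con^{(k')}$ by $(k')^{-1}$ turns it into $\Con^{(\ell)} = \Con$, which is exactly what the lemma demands.

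Finally I would dispose of the degenerate case where $k$ or $k'$ equals $0$. Here $\lambda_0 = |\Con|$, whereas for $k' \neq 0$ the sum $\lambda_{k'}$ is a sum of $|\Con|$ distinct \emph{nontrivial} powers of $\xi_p$ (as $0 \notin \Con^{(k')}$), so by the triangle inequality its modulus is strictly less than $|\Con|$; thus $\lambda_0 \neq \lambda_{k'}$ and the implication holds vacuously, while for $k = k' = 0$ one simply takes $\ell = 1$. The main obstacle is the converse direction: one must argue that the bare numerical coincidence $\lambda_k = \lambda_{k'}$ of two character sums forces the \emph{exact} set equality $\Con^{(k)} = \Con^{(k')}$. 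The crucial input making this work is the irreducibility of the $p$-th cyclotomic polynomial — equivalently, that $1, \xi_p, \dots, \xi_p^{p-2}$ are $\mathbb{Q}$-linearly independent — which is precisely where the primality of $p$ enters; everything else is bookkeeping about dilations of $\Con$.
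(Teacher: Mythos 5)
Your proof is correct and takes essentially the same route as the paper's: the backward direction by re-indexing the sum, and the forward direction by using $\mathbb{Q}$-linear independence of the $p$-th roots of unity to force the set equality $\{\con k \mid \con \in \Con\} = \{\con k' \mid \con \in \Con\}$ (the paper invokes the linear independence of $\xi_p, \dots, \xi_p^{p-1}$ directly, while you use the equivalent formulation that every rational relation among $1, \xi_p, \dots, \xi_p^{p-1}$ is a multiple of the all-ones relation). One small quibble: when $|\Con| = 1$ your strict triangle inequality in the degenerate case becomes an equality, though the conclusion $\lambda_0 \neq \lambda_{k'}$ still holds there since $\lambda_{k'}$ is then a single nontrivial root of unity rather than the positive number $|\Con|$.
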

\begin{proof}
As $0 \not \in \Con$ we have $\lambda_k = |\Con|$ if and only if $k = 0$. In particular the statement holds for $k=0$ or $k'=0$ and we can assume $k \neq 0$ and $k'\neq 0$.

If there exists such an $1 \leq \ell \leq p-1$ with $\{ \con \ell \mid \con \in \Con\} = \Con$ and $k = \ell k'$ we compute:
$$\lambda_k = \sum_{\con \in \Con} \xi_p^{\con k} = \sum_{\con \in \Con} \xi_p^{\con \ell k'} = \sum_{\con \in \Con} \xi_p^{\con k'} = \lambda_{k'}$$
On the other hand, if $\lambda_k = \lambda_{k'}$ we have $\sum_{\con \in \Con} \xi_p^{\con k} = \sum_{\con \in \Con} \xi_p^{\con k'}$. But in both sums only $\xi_p^i$ for $1 \leq i \leq p-1$ appear, which are linearly independent over $\mathbb{Q}$. Then this equality is a linear dependence, which implies that in both sums all exponents have to appear equally often. In particular, as every exponent appears at most once, we have $\{\con k \mid \con \in \Con\} = \{\con k' \mid \con \in \Con\}$, i.e.\ $\{\con (kk'^{-1}) \mid \con \in \Con\} = \Con$ as desired.
\end{proof}
\begin{definition}
For $\emptyset \neq \Con \subseteq \mathbb{Z}_p \setminus\{0\}$ we define the subgroup of $\mathbb{Z}_p^{\times}$ that fixes $\Con$ as $H_\Con := \{h \in \mathbb{Z}_p^{\times} \mid \{\con h \mid \con \in \Con\} = \Con\}$. We write $d_\Con := [\mathbb{Z}_p^\times : H_\Con] = (p-1) / |H_\Con|$ for the index of this subgroup.
\end{definition}
\begin{remark} \label{Z_p_automorphisms}
For $\emptyset \neq \Con \subseteq \mathbb{Z}_p\setminus \{0\}$ the previous lemma now states that the eigenvalues $\lambda_k$ have equivalence classes $\{0\}$ and the cosets of $H_\Con$, i.e.\ $\{g H_\Con \mid 0 \leq g\leq p-1 \}$. \\
Note that for any $h \in H_\Con$ and $b \in \mathbb{Z}_p$ the linear map $\varphi_{h,b} : \mathbb{Z}_p \rightarrow \mathbb{Z}_p: g \mapsto gh+b$ is an automorphism of the Cayley graph $\Cay(\mathbb{Z}_p, \Con)$, as we have $\varphi_{h,b}(\con+g) = (\con+g)h+b = \con h + (gh+b) = \con h + \varphi_{h,b}(g)$ and $\con h \in \Con$, i.e.\ all neighbors of $g$ are mapped to neighbors of $\varphi_{h,b}(g)$. 
\end{remark}
Now we can calculate what happens to the coloring of our Cayley graph after individualizing one vertex:
\begin{lemma} \label{Z_p_one}
Let $\emptyset \neq \Con \subseteq \mathbb{Z}_p \setminus \{0\}$ and let $g_0 \in \mathbb{Z}_p$. Let $\mathcal{C} = \{\{g_0\}, \mathbb{Z}_p \setminus \{g_0\}\}$ be the coloring where we individualize the vertex $g_0$. Then $\CR_\Con^*(\mathcal{C}) = \{g_0 + aH_\Con \mid a \in \mathbb{Z}_p\}$.
\end{lemma}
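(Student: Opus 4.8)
The plan is to use Proposition~\ref{cr_by_multiplication}, which tells us that $S(\CR_\Con^*(\mathcal{C}))$ is the smallest S-module containing $S(\mathcal{C})$ and closed under multiplication by $\underline{\Con}$. So I would compute this smallest $\underline{\Con}$-stable S-module directly, working in the eigenbasis $\{e_k\}$ where multiplication by $\underline{\Con}$ acts diagonally with eigenvalues $\lambda_k$. The starting S-module $S(\mathcal{C})$ has basic sets $\{g_0\}$ and $\mathbb{Z}_p \setminus \{g_0\}$, so it is spanned by $\underline{g_0}$ and $\underline{\mathbb{Z}_p}$ (equivalently by $\underline{g_0}$ and the all-ones vector $\sum_g \underline{g} = p\,e_0$). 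My target is to show the stabilized partition is exactly $\{g_0 + aH_\Con \mid a \in \mathbb{Z}_p\}$, i.e.\ the orbits of $g_0$ under the affine maps $\varphi_{h,0}$ shifted appropriately, or more precisely the cosets of $H_\Con$ translated by $g_0$.

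First I would observe that the claimed partition is coarse enough to be respected: by Remark~\ref{Z_p_automorphisms} and Lemma~\ref{CR_automorphism_stable}, the automorphisms $\varphi_{h,b}$ that fix the color of $g_0$ (namely those with $g_0 h + b = g_0$, i.e.\ $b = g_0(1-h)$ for $h \in H_\Con$) must preserve the stabilized coloring; these automorphisms act on $\mathbb{Z}_p \setminus \{g_0\}$ with orbits exactly the sets $g_0 + aH_\Con$ for $a \neq 0$. This shows that $\CR_\Con^*(\mathcal{C})$ is at least as coarse as the claimed partition, so every set $g_0 + aH_\Con$ lies inside a single color class. For the reverse inclusion I must show the refinement is at least this fine, i.e.\ that the smallest $\underline{\Con}$-stable S-module containing $\underline{g_0}$ and $\underline{\mathbb{Z}_p}$ actually separates the distinct cosets. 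The cleanest route is to expand $\underline{g_0} = \sum_{k=0}^{p-1} \xi_p^{g_0 k} e_k$ and note that repeatedly multiplying by $\underline{\Con}$ and taking linear combinations lets me isolate, via a Vandermonde-type argument on the distinct eigenvalues $\lambda_k$, the projection of $\underline{g_0}$ onto each eigenspace-sum corresponding to a single value of $\lambda_k$. By Lemma~\ref{eigenvalues_same}, the indices $k$ sharing an eigenvalue are exactly a coset of $H_\Con$ (together with the special class $\{0\}$), so these projections are sums $\sum_{k \in kH_\Con} \xi_p^{g_0 k} e_k$, and transforming back to the group basis yields simple quantities that distinguish the translated cosets $g_0 + aH_\Con$.

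Concretely, the key computation is that the module generated by $\underline{g_0}$ under multiplication by $\underline{\Con}$, using the polynomial functional calculus $p(\underline{\Con})\,\underline{g_0}$, spans all vectors of the form $\sum_k p(\lambda_k)\xi_p^{g_0 k} e_k$ as $p$ ranges over polynomials; since the $\lambda_k$ take exactly $d_\Con + 1$ distinct values, Lagrange interpolation lets me pick $p$ vanishing on all eigenvalue-classes but one, thereby extracting each block. I expect the main obstacle to be bookkeeping around the trivial eigenvalue $\lambda_0 = |\Con|$ and the subtlety that we are working with an S-module (closed under the coefficient-extraction operations of Corollary~\ref{extract_coefficients} and Corollary~\ref{s_module_intersections}) rather than a mere subalgebra: I must verify that the real-coefficient extraction operations available in an S-module suffice to turn the complex eigenspace projections into the correct $\{0,1\}$-indicator vectors $\underline{g_0 + aH_\Con}$, and that translating each block back from the $e_k$-basis to the $\underline{g}$-basis genuinely produces constant coefficients exactly on each coset $g_0 + aH_\Con$. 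Combining the coarseness bound from the automorphism argument with this fineness bound forces equality, giving $\CR_\Con^*(\mathcal{C}) = \{g_0 + aH_\Con \mid a \in \mathbb{Z}_p\}$.
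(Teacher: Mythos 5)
Your skeleton matches the paper's: both arguments start from Proposition \ref{cr_by_multiplication}, both obtain the eigenspace projections of $\underline{g_0}$ inside $V := S(\CR_\Con^*(\mathcal{C}))$ (your Lagrange-interpolation/functional-calculus step is exactly how the paper's ``fact from linear algebra'' is justified), and both use the automorphisms $\varphi_{h,b}$ fixing $g_0$ together with Lemma \ref{CR_automorphism_stable} to conclude that each coset $g_0 + aH_\Con$ lies inside a single color class. The difference is how the fineness direction is closed, and that is where your proposal has a real hole. You need that, when a block projection $\sum_{k \in aH_\Con}\xi_p^{g_0 k}e_k$ is rewritten in the group basis, its coefficients are not only constant on the cosets $g_0 + aH_\Con$ (easy, since $aH_\Con$ is invariant under multiplication by $H_\Con$) but take \emph{pairwise distinct} values on distinct cosets --- otherwise Corollary \ref{extract_coefficients} only extracts unions of cosets and your fineness bound collapses. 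You flag this but file it under ``bookkeeping''; it is in fact the crux of your route. The coefficient of $\underline{g}$ in such a projection is a Gaussian-period-type sum $\frac{1}{p}\sum_{k \in aH_\Con}\xi_p^{(g_0-g)k}$, and distinctness across cosets genuinely requires an argument --- namely the one already used in Lemma \ref{eigenvalues_same}: sums of the roots $\xi_p,\dots,\xi_p^{p-1}$ over disjoint nonempty index sets are distinct because these roots are linearly independent over $\mathbb{Q}$. (Your other worry, about ``real-coefficient extraction,'' is a non-issue: Lemma \ref{S_module_extract} is stated over an arbitrary field, here $K=\mathbb{C}$, with arbitrary maps $\phi$.)

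Your plan is completable once that distinctness is proved, but note that the paper sidesteps the question entirely with a dimension count: the $d_\Con+1$ projections are nonzero and supported on disjoint sets of the $e_k$, hence linearly independent, giving $\dim_\mathbb{C} V \geq d_\Con+1$; the automorphism argument shows every basic set of $V$ is a union of cosets, so $|\Basic(V)| \leq d_\Con+1$; since $\dim_\mathbb{C} V = |\Basic(V)|$, equality holds, and a partition that is coarser than the coset partition yet has the same number of parts must equal it. No computation of the projections in the group basis is ever needed. Either adopt that counting argument, or keep your route and write out the linear-independence argument for the distinctness of the period sums.
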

\begin{proof}
By Proposition \ref{cr_by_multiplication} we have that $V := S(\CR_{\Con}^*(\mathcal{C}))$ is the smallest S-module that contains $S(\mathcal{C})$ and that is stable under multiplication by $\underline{\Con}$.

The map $\varphi_{1, g_0}: g \mapsto g + g_0$ is an isomorphism of colored graphs, where $0$ and $g_0$ are individualized respectively. Hence we can assume without loss of generality that $g_0 = 0$.

We will use the following fact from linear algebra:\\
\textit{Let $W' \subseteq W$ be  finite dimensional vector spaces and $\psi : W \rightarrow W$ a diagonalizable linear endomorphism, such that $\psi(W') \subseteq W'$ with $v_1, \dots, v_n \in W$ a basis of $W$ of eigenvectors corresponding to the eigenvalues $\mu_1, \dots, \mu_n$. Then if $v = \sum_{i=1}^{n} a_i v_i \in W'$ we also have $\sum_{1 \leq i \leq n \,\mu_i = \mu} a_iv_i \in W'$  for any $\mu\in \mathbb{C}$, i.e.\ the projection to the eigenspaces are in $W'$.}

In our case we have that the $e_k$ are a basis of eigenvectors of the linear endomorphism $\underline{\Con}\cdot$ and Lemma \ref{eigenvalues_same} says that $\lambda_i = \lambda_j$ precisely for $i, j$ with $i = \con \cdot j$ and $\con \in H_\Con$. In particular we get $d_\Con + 1$ different eigenvalues. Applying the fact to $\underline{g_0} = \underline{0} = \sum_{k = 0}^{p-1} e_k \in V$ we hence get $d_\Con+1$ non trivial elements in $V$ which consist of disjoint basis vectors and thus $\dim_\mathbb{C} V \geq d_\Con + 1$.

On the other hand, we have automorphisms $\varphi_{h, 0} : \mathbb{Z}_p \rightarrow \mathbb{Z}_p$ for $h \in H_\Con$, which map $0$ to $0$, hence respect the partition $\mathcal{C}$ and hence also $\CR_{\Con}^*(\mathcal{C})$ by Lemma \ref{CR_automorphism_stable}. Then $g \in \mathbb{Z}_p$ has to be in the same basic set of $V$ as $hg$ for all $h \in H_\Con$. For a fixed $g \in \mathbb{Z}_p\setminus \{0\}$ the $hg$ for $h \in H_\Con$ are pairwise distinct and hence $g$ is contained in a basic set of size at least $|H_\Con|$. But then we can have at most $(p-1)/|H_\Con| = d_\Con$ basic sets to cover these $p-1$ elements. In total this says $\dim_\mathbb{C}V \leq d_\Con+1$.

All in all this shows $\dim_{\mathbb{C}}V = d_\Con+1$ and we can only have equality for the upper bound if every element $g\in \mathbb{Z}_p$ with $g \neq 0$ is in a basic set of $\CR_{\Con}^*(\mathcal{C})$ with $|H_\Con|$ elements given by $\{gh \mid h \in H_\Con\} = g H_\Con$ as desired.
\end{proof}

Finally we can calculate what happens to the coloring of our Cayley graph after individualizing two vertices:
\begin{lemma} \label{Z_p_two}
Let $\emptyset \neq \Con \subseteq \mathbb{Z}_p \setminus \{0\}$ with $\Con \neq \mathbb{Z}_p \setminus \{0\}$ and let $g_0,g_1 \in \mathbb{Z}_p$ with $g_0 \neq g_1$. Let $\mathcal{C} = \{\{g_0\}, \{g_1\}, \mathbb{Z}_p \setminus \{g_0,g_1\}\}$ be the coloring where we individualize the vertices $g_0$ and $g_1$. Then $\CR_{\Con}^*(\mathcal{C}) = \{\{g\} \mid g \in \mathbb{Z}_p\}$, i.e.\ every vertex gets a unique color.
\end{lemma}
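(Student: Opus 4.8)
The plan is to work, via Proposition \ref{cr_by_multiplication}, with the S-module $V := S(\CR_\Con^*(\mathcal C))$, which is the smallest $\underline\Con$-stable S-module containing $\underline{g_0}$ and $\underline{g_1}$ (it automatically contains $\underline{\mathbb Z_p}$, hence $e_0 = \frac{1}{p}\underline{\mathbb Z_p}$). Since the translation $\varphi_{1,-g_0}$ is a graph automorphism (Remark \ref{Z_p_automorphisms}) carrying the coloring to the one individualizing $0$ and $g_1-g_0$, I may assume $g_0 = 0$. The claim that every vertex gets a unique color is equivalent to $\dim_{\mathbb C} V = p$, so the whole task is a dimension count. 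If $|H_\Con| = 1$ then already a single individualized vertex discretizes the graph by Lemma \ref{Z_p_one}, so I assume $|H_\Con|\ge 2$; note that the hypothesis $\Con\neq\mathbb Z_p\setminus\{0\}$ is exactly $H_\Con\subsetneq\mathbb Z_p^\times$, i.e.\ $d_\Con\ge 2$.

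First I would set up the eigenspace bookkeeping. By Lemma \ref{eigenvalues_same} the operator $\underline\Con\cdot$ has the one-dimensional eigenspace $\mathbb C e_0$ together with the $d_\Con$ eigenspaces $E_a := \langle e_k : k\in aH_\Con\rangle$, each of dimension $|H_\Con|$. As $V$ is $\underline\Con$-stable it splits as $V = \mathbb C e_0\oplus\bigoplus_a (V\cap E_a)$. Because $V$ has a basis of $\{0,1\}$-vectors it is invariant under $\mathrm{Gal}(\mathbb Q(\xi_p)/\mathbb Q)\cong\mathbb Z_p^\times$, and this Galois action sends $e_k\mapsto e_{tk}$, hence permutes the $E_a$ transitively; therefore all the numbers $\dim(V\cap E_a)$ coincide with a single value $\delta$, giving $\dim V = 1 + d_\Con\,\delta$. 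Thus it suffices to prove $\delta = |H_\Con|$.

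The engine connecting $\delta$ to the combinatorics is the following. Let $C := \{g\in\mathbb Z_p : \{g\}\in\Basic(V)\}$ be the set of vertices already isolated. For each $g\in C$ the projection $\sum_{k\in aH_\Con}\xi_p^{gk} e_k$ of $\underline g$ lies in $V\cap E_a$, and as $g$ ranges over $C$ these are rows of the submatrix $(\xi_p^{gk})_{g\in C,\ k\in aH_\Con}$ of the Fourier matrix. By Chebotarev's theorem (all minors of the $p\times p$ Fourier matrix are nonzero for prime $p$) this submatrix has rank $\min(|C|,|H_\Con|)$, so $\delta\ge\min(|C|,|H_\Con|)$; in particular $\delta = |H_\Con|$ as soon as at least $|H_\Con|$ vertices have been isolated. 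In the other direction, whenever $\{c\}\in\Basic(V)$ Lemma \ref{Z_p_one} forces $V$ to refine the coset partition $\{c + aH_\Con\}$; consequently, if $x\neq y$ share a color then $(x-c)(y-c)^{-1}\in H_\Con$ for every $c\in C$. The M\"obius map $c\mapsto(x-c)(y-c)^{-1}$ is a bijection of $\mathbb Z_p\cup\{\infty\}$ sending $\infty\mapsto 1\in H_\Con$, so only $|H_\Con|-1$ finite values of $c$ satisfy this; hence a non-discrete $V$ forces $|C|\le|H_\Con|-1$.

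The main obstacle is to close the loop between these two bounds, i.e.\ to show that color refinement actually isolates at least $|H_\Con|$ vertices. The naive attempt — intersecting the two coset partitions around $0$ and $g_1$ (legitimate by Corollary \ref{s_module_intersections}) — is not enough: their meet is in general not discrete, as one already sees for $p = 13$ and $H_\Con = \{1,3,9\}$, where the vertices $4$ and $10$ survive in a common class. I therefore expect the real work to be an iteration: each newly isolated vertex is fed back through Lemma \ref{Z_p_one} to refine further, and the Chebotarev rank bound guarantees that once $|H_\Con|$ vertices are isolated the eigenspace components fill up and $V$ becomes everything. Making this termination argument quantitative — and checking it against the extremal Paley case $d_\Con = 2$, $|H_\Con| = (p-1)/2$, where the single use of $d_\Con\ge 2$ must suffice — is where I expect the bulk of the effort to go; combining it with the confinement bound $|C|\le|H_\Con|-1$ yields the desired contradiction, and hence $\dim V = p$.
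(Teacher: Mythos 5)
Your first two paragraphs reproduce the paper's setup exactly: the reduction to the dimension count $\dim_{\mathbb{C}}V=p$ via Proposition \ref{cr_by_multiplication}, the splitting of $V$ along the eigenspaces of $\underline{\Con}\cdot$, and the Galois-semilinear maps $e_k\mapsto e_{\ell k}$ showing that all components $V\cap E_a$ have a common dimension $\delta$ (the paper's $d$), so that $\dim_{\mathbb{C}}V=1+d_\Con\delta$. But after that there is a genuine gap, which you yourself flag: your two bounds never meet. From isolated vertices you get $\delta\ge\min(|C|,|H_\Con|)$, and from the M\"obius-map argument you get $|C|\le|H_\Con|-1$ whenever $V$ is not discrete; these are perfectly consistent with each other (e.g.\ $|C|=2$, $\delta=2$ violates nothing), so no contradiction follows. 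The iteration you propose instead --- show that refinement keeps isolating vertices until at least $|H_\Con|$ of them are singletons --- is never constructed, and your own confinement bound shows it cannot be run naively: a non-discrete stable $V$ has at most $|H_\Con|-1$ singletons, so proving that refinement isolates $|H_\Con|$ vertices is essentially equivalent to the lemma itself. The $p=13$ example shows the right instinct about the difficulty, but no mechanism is supplied that gets past it.

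The idea you are missing is that Chebotar\"ev should be applied to indicator vectors of \emph{small basic sets}, not only to singletons. Suppose $\delta<|H_\Con|$ and let $T_1,\dots,T_{\delta+1}$ be the $\delta+1$ smallest basic sets of $V$. The paper first shows by pure counting that $|T_1|+\dots+|T_{\delta+1}|\le|H_\Con|$: otherwise, since two of these sets are the singletons $\{g_0\}$ and $\{g_1\}$, each basic set outside the chosen $\delta+1$ would have size at least $(|H_\Con|-1)/(\delta-1)$, and summing over all $1+d_\Con\delta$ basic sets gives a total exceeding $p$ --- this computation is exactly where $d_\Con>1$, $\delta>1$ and the presence of \emph{two} individualized vertices enter. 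Then the projections of $\underline{T_1},\dots,\underline{T_{\delta+1}}$ onto the span of $\{e_k : k\in H_\Con\}$ lie in $V$, and they are linearly independent: the Vandermonde submatrix with rows indexed by $H_\Con$ and columns by $\bigcup_j T_j$ has full column rank by Chebotar\"ev (since $|\bigcup_j T_j|\le|H_\Con|$), and merging disjoint groups of columns by addition preserves independence. This produces $\delta+1$ independent vectors in the $\delta$-dimensional space $V\cap E_1$, the desired contradiction --- with no iteration or termination analysis at all. So your proposal is not wrong in what it asserts, but it stops precisely at the point where the actual proof begins.
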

\begin{proof}
By Proposition \ref{cr_by_multiplication} we have that $V := S(\CR_\Con^*(\mathcal{C}))$ is the smallest S-module that contains $S(\mathcal{C})$ and that is stable under multiplication by $\underline{\Con}$.

Let $\lambda_0', \dots, \lambda_{d_\Con}'$ (with $\lambda_0' = |\Con|$) be all the distinct eigenvalues of the multiplication by $\underline{\Con}$. By the fact in the proof of Lemma \ref{Z_p_one} we have that $V = E_0 \oplus E_1 \oplus \dots \oplus E_{d_\Con} \subseteq \mathbb{C}[\mathbb{Z}_p]$, where $E_j$ is the space of eigenvectors of the multiplication by $\underline{\Con}$ corresponding to the eigenvector $\lambda_j'$.

We have $\dim_\mathbb{C} E_0 = 1$ because $e_0 = \frac{1}{p}\sum_{g \in \mathbb{Z}_p} \underline{g} \in V$ (and there is only one dimension of eigenvectors for this eigenvalue in $\mathbb{C}[\mathbb{Z}_p]$). We also claim $\dim_{\mathbb{C}} E_i = \dim_{\mathbb{C}} E_j$ for all $1 \leq i, j \leq d_\Con$:

Let $1 \leq \ell \leq p-1$. From the theory of cyclotomic fields (see e.g.\ \cite[Chapter 6.3]{lang2012algebra}) we know that there is a field automorphism of $\mathbb{Q}(\xi_p)$ which maps $\xi_p$ to $\xi_p^{\ell}$. Then we can lift this automorphism to a field automorphism $\sigma_\ell : \mathbb{C}\rightarrow \mathbb{C}$ with $\sigma_\ell(\xi_p) = \xi_p^{\ell}$ using Galois theory (e.g.\ \cite[Theorem 7]{C_automorphisms} or we could have just worked with $K = \mathbb{Q}(\xi_p)$ instead of $\mathbb{C}$ in our case of $G=\mathbb{Z}_p$). Now consider the $\sigma_\ell$-semilinear map $\chi_{\ell} : \mathbb{C}[\mathbb{Z}_p] \rightarrow \mathbb{C}[\mathbb{Z}_p]$ induced by $\mu e_i \mapsto \sigma_\ell(\mu) e_{\ell\cdot i}$. We compute for any $g \in \mathbb{Z}_p$:
$$\chi_{\ell}(\underline{g}) = \chi_{\ell}(\sum_{k = 0}^{p-1} \xi_p^{kg} e_k) = \sum_{k = 0}^{p-1} \sigma_\ell(\xi_p^{kg}) e_{\ell k} = \sum_{k = 0}^{p-1} \xi_p^{\ell kg} e_{\ell k}  = \underline{g}$$
In particular we get $\chi_{\ell}(\underline{T}) = \underline{T}$ for any $T \subseteq \mathbb{Z}_p$. Any $v \in V$ is of the form $\mu_1 \underline{T_1} + \dots + \mu_r \underline{T_r}$ for some $\mu_1, \dots, \mu_r \in \mathbb{C}$ and $\underline{T_1}, \dots, \underline{T_r} \in V$, which implies $\chi_{\ell}(v) = \sigma_\ell(\mu_1)\underline{T_1} + \dots + \sigma_\ell(\mu_r)\underline{T_r} \in V$, i.e.\ the set $V$ is fixed under $\chi_{\ell}$. \\
Now let $1 \leq i, j \leq d_\Con$ and $\dim_{\mathbb{C}}E_i = d$ and hence $v_1, \dots, v_d \in V$ linear independent elements which are eigenvectors for the eigenvalue $\lambda_i'$. By Lemma \ref{eigenvalues_same} the eigenvectors for the eigenvalue $\lambda_i'$ are linear combinations of $e_k$ for $k \in h_iH_\Con$ for some $h_i \in \mathbb{Z}_p\setminus\{0\}$. Similarly the eigenvectors for the eigenvalue $\lambda_j'$ are linear combinations of $e_k$ for $k \in h_jH_\Con$ for some $h_j \in \mathbb{Z}_p\setminus\{0\}$. If we let $\ell := h_i^{-1}h_j$ we get that $\chi_\ell(e_k)$ is an eigenvector for the eigenvalue $\lambda_j'$ if $e_k$ is an eigenvector for the eigenvalue $\lambda_i'$. Hence $\chi_\ell(v_1), \dots, \chi_\ell(v_d) \in V$ are eigenvectors for the eigenvalue $\lambda_j'$. But they are still linearly independent because $\mu_1\chi_\ell(v_1) + \dots + \mu_d\chi_\ell(v_d)=0$ implies $\sigma_\ell^{-1}(\mu_1) v_1 + \dots + \sigma_\ell^{-1}(\mu_d) v_d = 0$ (because $\chi_\ell$ is injective) i.e.\ $\mu_1 = \dots = \mu_d = 0$. This proves $\dim_{\mathbb{C}}E_j \geq d = \dim_{\mathbb{C}}E_i$ and by symmetry the desired equality.

Write $d := \dim_{\mathbb{C}} E_1$. Then we get $\dim_\mathbb{C}V = 1 + d_\Con \cdot d$. We have to prove $d = |H_\Con|$ as this implies $\dim_\mathbb{C} V = 1 + d_\Con \cdot |H_\Con| = p$ and hence $V = \mathbb{C}[\mathbb{Z}_p]$.

Now suppose for the sake of contradiction that $d < |H_\Con|$. In particular we have $|\Basic(V)| = 1 + d_\Con \cdot d$. Now consider the $d+1$ smallest (non empty) sets of $\Basic(V)$ given by $T_1, \dots, T_{d+1}$. We claim that $|T_1| + \dots + |T_{d+1}| \leq |H_\Con|$:

We get $|H_\Con| \neq 1$, as we have $|H_\Con| > d \geq 1$ because there is an eigenvector to every eigenvalue $\lambda_i$ in $V$ as seen in the proof of Lemma \ref{Z_p_one}. 
If $d=1$ the claim is immediately true, as there are two sets of size $1$, given by $\{g_0\}$ and $\{g_1\}$.

Now suppose $d > 1$.
As $\Con \neq \mathbb{Z}_p \setminus \{0\}$ we get $H_\Con \neq \mathbb{Z}_p \setminus \{0\}$ and hence $d_\Con > 1$. Suppose for the sake of contradiction that $|T_1| + \dots + |T_{d+1}| > |H_\Con|$. 
If we remove the two sets of size one (given by $\{g_0\}$ and $\{g_1\}$), then the $d-1$ remaining sets have total size at least $|H_\Con| - 1$. As the $T_i$ are the smallest basic sets, this implies that every other set must have size at least $\frac{|H_\Con|-1}{d-1}$.
All in all this shows that the $d_\Con d+1$ sets in $\Basic(U)$ must have total size at least 
$|H_\Con| + 1 +  (d_\Con d+1 - (d+1))\cdot \frac{|H_\Con|-1}{d-1} = 2 + (d_\Con d-1) \frac{|H_\Con|-1}{d-1}$. 

We calculate when this is greater than $p$:
\begin{align*}
   && 2 + (d_\Con d-1)\cdot \frac{|H_\Con|-1}{d-1} &> p \\
    \Leftrightarrow && (d_\Con d-1)\frac{(p-1)-d_\Con}{d_\Con (d-1)} + (2-p)&> 0 \\
    \Leftrightarrow && (d_\Con d-1)((p-1)-d_\Con) + (2-p)d_\Con (d-1) &> 0 \\
    \Leftrightarrow && -d_\Con^2d + d_\Con d(p-1) - (p-1) +d_\Con + (2-p)d_\Con d -(2-p)d_\Con  &> 0 \\
    \Leftrightarrow && -d_\Con^2d + d_\Con d - (p-1) + (p-1)d_\Con  &> 0 \\
    \Leftrightarrow && (d_\Con -1)( (p-1)-d_\Con d)  &> 0 \\
    \Leftrightarrow && (p-1)-d_\Con d  &> 0 \\
    \Leftrightarrow &&  d  &< \frac{p-1}{d_\Con} = |H_\Con| 
\end{align*} 
where we used $d > 1$ and $d_\Con > 1$. As $d < |H_\Con|$ this shows that the total sum is always greater than $p$ which is a contradiction as the sum of the sizes of all basic sets has to be exactly $|G| = |\mathbb{Z}_p| = p$ and hence we get $|T_1| + \dots + |T_{d+1}| \leq |H_\Con|$ in every case as desired.

Finally we claim that the elements $\underline{T_1}, \dots, \underline{T_{d+1}}$ induce $d+1$ linearly independent elements in $E_1$, which would be a contradiction to $\dim_{\mathbb{C}}E_1 = d$.

Without loss of generality let $\lambda_1'$ correspond to the eigenvalue with eigenvectors $e_k$ for all $k \in H_\Con$ (by just renumbering the $\lambda_i'$ for $i > 0$). We can write $H_\Con = \{k_1, \dots, k_{r}\}$ for some $k_1 < \dots < k_{r}$ and $r = |H_\Con|$. If we apply the fact of the proof of Lemma \ref{Z_p_one} to a $\underline{T}$ for $T \subseteq \mathbb{Z}_p$ for the eigenvalue $\lambda_1'$ we get the element $\sum_{k \in H_\Con}\sum_{g \in T} \xi_p^{kg} e_k \in V$ and we want to show that these elements for the $T_1, \dots, T_{d+1}$ are linearly independent, which is equivalent to the matrix $(\sum_{g \in T_j} \xi_p^{k_ig})_{ 1 \leq i \leq r, 1\leq j\leq d+1}$ having full column rank (as the $e_{k_1}, \dots, e_{k_r}$ are linearly independent).

A classical theorem due to Chebotarëv \cite{Stevenhagen1996} states that every square submatrix of the Vandermonde matrix $(\xi_p^{ij})_{0 \leq i,j \leq p-1}$ is invertible. If we consider the set $T := \bigcup_{1 \leq i \leq d+1} T_i$ we get $|T| \leq |H_\Con|$ by what we proved before. Write $T = \{g_1', \dots, g_{r'}'\}$ with $g_1' < \dots < g_{r'}'$ and $r' = |T|$. Then the Chebotarëv Theorem implies that the square sub-matrix of the Vandermonde matrix given by $(\xi_p^{k_ig_j'})_{1\leq i,j \leq r'}$ is invertible and thus that the matrix $(\xi_p^{k_ig_j'})_{1\leq i \leq r, 1 \leq j \leq r'}$ has full column rank. Finally, from this we obtain our matrix $(\sum_{g \in T_j} \xi_p^{k_ig})_{ 1 \leq i \leq r, 1\leq j\leq d+1}$ by merging columns using addition, which then implies that the new columns are also linearly independent (as they all depend on disjoint subsets of the linearly independent columns) as desired.

This is a contradiction and we get $\dim_{\mathbb{C}} E_1 = |H_\Con|$, i.e.\ $\dim_{\mathbb{C}} V = p$.
\end{proof}
Lemmas \ref{Z_p_one} and \ref{Z_p_two} together prove Theorem \ref{tinhofer_prop}, that every Cayley graph over $\mathbb{Z}_p$ has the Tinhofer property:
\begin{proof}[Proof (Theorem \ref{tinhofer_prop})]
We only have to see that after every individualization we have enough automorphisms. The cases $\Con = \mathbb{Z}_p \setminus \{0\}$ and $\Con = \emptyset$ are clearly correct, as every permutation of vertices is an automorphism. Now let $\emptyset \neq \Con \neq \mathbb{Z}_p \setminus \{0\}$. As every Cayley graph is regular the first color refinement ends with every vertex having the same color, which is fine as every Cayley graph is vertex transitive. After individualizing one vertex and applying the color refinement, Lemma \ref{Z_p_one} tells us that we are done only if $H_\Con = \{1\}$. Otherwise we see that the automorphisms still act transitively on the color classes as discussed in the proof of the lemma. Finally if $H_\Con \neq \{1\}$, we have to individualize another vertex and Lemma \ref{Z_p_two} implies that every vertex has a unique color now. 
\end{proof}
\begin{remark}
Lemma \ref{Z_p_one} and Lemma \ref{Z_p_two} also imply the classical theorem that the automorphisms of non-trivial circulant graphs of prime order are precisely the linear ones listed in Remark \ref{Z_p_automorphisms} (see \cite{Mueller2005} for a more general discussion on this):\\  If there was any other automorphism $\psi$ on such a non-trivial Cayley graph $\Cay(\mathbb{Z}_p, \Con)$ (i.e.\ $\Con \neq \mathbb{Z}_p\setminus \{0\}$ and $\Con \neq \emptyset$), also $\psi' : x \mapsto \psi(x) - \psi(0)$ would be an automorphism, which maps $0$ to $0$ and which is also not listed in Remark \ref{Z_p_automorphisms}. But Lemma \ref{Z_p_one} together with Lemma \ref{CR_automorphism_stable} shows that $\psi'(1) \in H_\Con$ and hence $\psi'' : x \mapsto \psi'(1)^{-1}\psi'(x)$ is also a non trivial automorphism (as otherwise $\psi'$ would have been listed in Remark \ref{Z_p_automorphisms}). But $\psi''$ fixes $0$ and $1$ and Lemma \ref{CR_automorphism_stable} tells us that after individualizing $0$ and $1$ not every vertex would have a unique color, which is a contradiction to Lemma \ref{Z_p_two}. 
\end{remark}
\subsection{A counterexample for \texorpdfstring{$\mathbb{Z}_4 \times \mathbb{Z}_4$}{Z4 x Z4}}
We give a counterexample showing that not every (undirected) Cayley graph of a finite (abelian) group satisfies the Tinhofer property:

Consider $X = \Cay(G, S)$ for $G = \mathbb{Z}_4 \times \mathbb{Z}_4$ and $S = \{(1,0), (3, 0), (0, 1), (0, 3), (1, 1), (3, 3)\}$. This is an undirected Cayley-graph of a finite abelian group. We compute the color refinement of $X$ after individualizing the vertex $(0,0) \in G$, by listing all color classes after each round of refinement: 
\begin{itemize}
    \item[0.] $\{(0, 0)\}$, $X \setminus \{(0, 0)\}$
    \item[1.] $\{(0, 0)\}$, $\{(1, 0), (3, 0), (0, 1), (0, 3), (1, 1), (3, 3)\}$, \\$\{(0, 2), (1, 2), (1,3), (2, 0),(2, 1), (2, 2), (2, 3), (3, 1), (3, 2)\}$
    \item[2.] $\{(0, 0)\}$, $\{(1, 0), (3, 0), (0, 1), (0, 3), (1, 1), (3, 3)\}$, $\{(0, 2), (1,3), (2, 0), (2, 2), (3, 1)\}$, \\$\{(1,2), (2,1), (2, 3), (3, 2)\}$
    \item[3.] $\{(0, 0)\}$, $\{(1, 0), (3, 0), (0, 1), (0, 3)\}$, $\{(1, 1), (3, 3)\}$, $\{(0, 2), (1,3), (2, 0), (3, 1)\}$, \\$\{(2, 2)\}$, $\{(1,2), (2,1), (2, 3), (3, 2)\}$
\end{itemize}
In particular, $\{(0,2), (1,3), (2,0), (3,1)\}$ forms one of the resulting color classes (and it is the only interesting color class, as the elements of any other color class are related by automorphisms of $X$ given by $(a,b) \mapsto (b,a)$ or $(a,b)\mapsto (-a, -b)$ or $(a,b) \mapsto (-b, -a)$ which all fix $(0,0)$). 

If $X$ would satisfy the Tinhofer property, there would be a graph automorphism $\varphi: G \rightarrow G$ which fixes all of the color classes above and sends $(1,3)$ to $(0,2)$. In particular, if $\varphi(A) = B$ for subsets $A, B \subseteq G$, then $\varphi$ maps the neighbors of $A$ in a particular color class to the neighbors of $B$ in that color class. Thus the neighbors of $\varphi((1,3)) = (0,2)$ in the sixth color class give $\varphi(\{(1,2),(2,3)\}) = \{(1,2), (3,2)\}$ and their neighbors in the second color class give $\varphi(\{(3,0), (0,1)\}) = \{(0,3),(0,1)\}$. But the neighbors of $\varphi((1,3)) = (0,2)$ in the second color class give $\varphi(\{(0,3), (1,0)\}) = \{(0,3), (0,1)\}$ and hence $\varphi(\{(3,0), (0,1), (0,3), (1,0)\}) = \{(0,3), (0,1)\}$. This is a contradiction to $\varphi$ being a bijection. Then there cannot exist such an automorphism and hence $X$ does not satisfy the Tinhofer property.
\bibliographystyle{plain}
\bibliography{main}
\end{document}